\theoremstyle{plain}
\newtheorem{thm}{Theorem}
\numberwithin{thm}{subsection}
\newtheorem{lem}[thm]{Lemma}
\newtheorem{cor}[thm]{Corollary}
\newtheorem{prop}[thm]{Proposition}
\newtheorem{definition}[thm]{Definition}
\newtheoremstyle{shadow}{}{}{\itshape}{}{\bfseries}{.}{.5em}{\thmname{#3}}
\theoremstyle{shadow}
\theoremstyle{definition}
\newtheorem{defin}[thm]{Definition}
\newtheorem{exmpl}[thm]{Example}
\theoremstyle{remark}
\newtheorem{rem}[thm]{Remark}
\newcommand{\meas}{\mathrm{Meas}}
\newcommand{\Fp}{\mathbb{F}_p}
\newcommand{\Zp}{\mathbb{Z}_p}
\newcommand{\G}{{\mathbb{Z}^{*}_p/\{\pm 1\}}}
\newcommand{\Qp}{\mathbb{Q}_p}
\newcommand{\ZZ}{\mathbb{Z}}
\newcommand{\QQ}{\mathbb{Q}}
\newcommand{\CC}{\mathbb{C}}
\newcommand{\NN}{\mathbb{N}}
\newcommand{\NNo}{{\mathbb{N}_0}}
\newcommand{\Cev}{\mathrm{C}^{\mathrm{even}}}
\newcommand{\Cont}{\mathrm{C}}
\newcommand{\MString}{\mathrm{MString}}
\newcommand{\MSpin}{\mathrm{MSpin}}
\newcommand{\KO}{\mathrm{KO}}
\newcommand{\tmf}{\mathrm{tmf}}
\newcommand{\MF}{\mathrm{MF}}
\newcommand{\MFp}{\mathrm{MF}_p}
\newcommand{\Momeulzwei}{\mathrm{Mom}^\mathrm{Euler}_{\geq 4}}
\newcommand{\Momeuleins}{\mathrm{Mom}^\mathrm{Euler}_{\geq 2}}
\newcommand{\Momzero}{\mathrm{Mom}^{(0)}_{\geq 2m}}
\newcommand{\Momzerozwei}{\mathrm{Mom}^{(0)}_{\geq 4}}
\newcommand{\Momeul}{\mathrm{Mom}^\mathrm{Euler}_{\geq 2m}}
\newcommand{\coker}{\mathrm{coker}}
\newcommand{\id}{\mathrm{id}}
\newcommand{\im}{\mathrm{im}}
\newcommand{\Bp}{\ensuremath{\mathrm{(B)}_p}}
\newcommand{\Bpa}{\ensuremath{\widetilde{(\mathrm{B})_p}}}
\title[Simultaneous Kummer congruences and $\mathbb{E}_\infty$-orientations]{Simultaneous Kummer congruences and $\mathbb{E}_\infty$-orientations of KO and tmf\\ }
\author{Niko Naumann}
\address{Fakult\"at f\"ur Mathematik Universit\"at Regensburg  \\ 93040 Regensburg }
\email{niko.naumann@mathematik.uni-regensburg.de }
\thanks{}
\date{}
\author{Johannes Sprang}
\address{Fakult\"at f\"ur Mathematik Universit\"at Regensburg  \\ 93040 Regensburg }
\email{johannes.sprang@mathematik.uni-regensburg.de }
\thanks{This work was supported by the Collaborative Research Centre SFB1085, funded by the DFG}
\date{}
\subjclass[2010]{55P42,55P43, 55P50, 11A07}
\begin{document}

\begin{abstract}

Building on results of M.~Ando, M.J.~Hopkins and C.~Rezk, we show the existence of uncountably many $\mathbb{E}_\infty$-String orientations of real K-theory KO and of topological modular forms tmf, generalizing the $\hat{A}$- (resp.~the Witten) genus. Furthermore, the obstruction to lifting an $\mathbb{E}_\infty$-String orientations from KO to tmf is identified with a classical Iwasawa-theoretic condition.\\
The common key to all these results is a precise understanding of the classical Kummer congruences, imposed {\em for all primes simultaneously}. This result is of independent arithmetic interest.

\end{abstract}

\maketitle
\tableofcontents

\section[Introduction]{Introduction}

Orientations of bundles with respect to cohomology theories play an important role in both topology and geometry. One example of such an orientation is given by the $\hat{A}$-genus defined by Atiyah, Bott and Shapiro \cite{atiyah_bott_shapiro}. This orientation is geometrically important since it assigns interesting invariants to spin manifolds and is related to the index of a Dirac-operator through the Atiyah--Singer index theorem. Part of the relevance of this orientation for homotopy theory stems from the fact that the unit of the real K-theory spectrum $\KO$ factors through the Atiyah-Bott-Shapiro orientation. By  \cite{joachim}, the Atiyah--Bott--Shapiro orientation refines to an $\mathbb{E}_\infty$-map 
\[
	\MSpin \rightarrow \KO
\]
on the Spin-bordism spectrum $\MSpin$.
The Witten genus suggested a similar factorization of the unit of $\tmf$ through the spectrum $\MString$ of String-bordism. In his ICM-talk \cite{hopkins_ICM}, M.J.~Hopkins announced the existence of an $\mathbb{E}_\infty$-map
\[
	\MString\rightarrow \tmf
\]
providing the desired factorization and giving the Witten genus on homotopy groups. More generally, he announced a completely arithmetic description of the set of all 
(homotopy classes of) $\mathbb{E}_\infty$-String orientations of $\KO$ and $\tmf$. The details appeared in the joint work \cite{andohopkinsrezk} of M. Ando, M.J. Hopkins and C. Rezk. They assign to every $\mathbb{E}_\infty$-String orientation a characteristic series, thus defining a map
\[
  b\colon \pi_0 \mathbb{E}_\infty (\MString,X)\rightarrow \prod_{k\geq 4} \pi_{2k}(gl_1 (X)) \otimes \QQ,\quad \text{$X=\KO$ or $\tmf$}.
\]
They also describe the image of this map in a purely arithmetic way as consisting of sequences of rational numbers (respectively rational modular forms) simultaneously satisfying a specific $p$-adic interpolation condition at every prime $p$. A similar result holds for $\mathbb{E}_\infty$-Spin orientations of $\KO$. Thus, the problem of understanding {\em all} $\mathbb{E}_\infty$-orientations was reduced to a purely arithmetic problem, but this problem remained open. The only sequences well-known to satisfy the requisite properties are those corresponding to the Atiyah--Bott--Shapiro orientation, respectively the Witten genus.\par
Our main result is a description of all $\mathbb{E}_\infty$-orientations. In \Cref{thm:many_orient}, we will for example show the existence of bijections
\[
\begin{tikzcd}[row sep=tiny]
	 \ZZ^\NNo\arrow{r}{\cong} & \pi_0 \mathbb{E}_\infty (\MSpin,\KO)\\
	 \ZZ^\NNo\arrow{r}{\cong} & \pi_0 \mathbb{E}_\infty (\MString,\KO).
\end{tikzcd}
\]
The group structure of $\ZZ^\NNo$ will be compatible with the torsor action of $[bspin,gl_1(\KO)]$ on  $\mathbb{E}_\infty (\MSpin,\KO)$, and similarly for $\mathbb{E}_\infty$-String orientations. These bijections are effectively computable in the following sense: Given a sequence in $\ZZ^\NNo$, it is possible to calculate finitely many terms of the characteristic sequence, i.e.~the image under $b$, of the corresponding orientation, see \Cref{ex:moments}.\par
As a further illustration, we also determine in \Cref{thm:zeta_ideal} which $\mathbb{E}_\infty$-String orientations of KO lift through the evaluation at the cusp map tmf$\to$KO, and in \Cref{thm:stringspinko} we determine
which $\mathbb{E}_\infty$-String orientations of KO factor through the canonical map
$\MString\to\MSpin$.\par

The proof of \Cref{thm:many_orient} reduces to understanding the condition on a sequence of integers $b_2,b_4,b_6,\ldots\in\ZZ$ that for every prime $p$ there should exist a $p$-adic measure $\mu_p$ on $\G$ such that for all $k\ge 1$ we have $\int_\G x^{2k}\,d\mu_p(x)=
b_{2k}.$ While for a {\em fixed} prime $p$, this condition is equivalent to the familiar Kummer congruences
from number theory, prior to the present work it was even unknown if there exists 
a single such sequence which is not identically zero. In \Cref{cor2} and \Cref{rem1} we show that these sequences can be characterized by explicit, recursive congruences.\par
We conclude the introduction with a brief outline of the paper.\par
After some preliminaries on $p$-adic analysis, we construct in \cref{subset:onb} 
a basis for the $p$-adic Banach space of continuous maps $\G\to\ZZ_p$ which consists
of polynomials (\Cref{prop:onb}). This is similar to, but slightly more involved than, the classical example 
of the Mahler basis for maps $\ZZ_p\to\ZZ_p$, which consists of the polynomials
$X \choose n$ ($n \ge 0$).\\
In \cref{sec:simultaneous} we record the fairly straight forward deduction of \Cref{cor2}
from some simple observations about the shape of the polynomial bases constructed above. We also compute explicitly (\Cref{ex:moments}) the resulting parametrization of all
sequences $b_2,\ldots$ as above. This shows in particular that any such sequence which is not identically zero must grow quite rapidly. It remains an interesting problem to find natural 
additional conditions which force the zero sequence to be the only solution. Equivalently, 
this is asking for natural conditions on an $\mathbb{E}_\infty$-map $\MSpin\to\KO$
which force it to be the $\hat{A}$-genus.\par
Sections \ref{sec:string_of_ko} and \ref{sec:string_of_tmf} contain a refined discussion and 
a proof of \Cref{thm:many_orient} above.\par
The final sections \ref{sec:eval_at_cusp} and \ref{sec:comparing} are meant to illustrate the
range of applicability of our methods. In the first, we ask which $\mathbb{E}_\infty$-String orientations of $\KO$ lift through the evaluation of the cusp map $\tmf\to\KO$, and we 
find a non-trivial condition involving the zeta-ideal from number theory (\Cref{thm:zeta_ideal}).
In the second one, we ask which $\mathbb{E}_\infty$-String orientations of $\KO$
factor though the canonical map $\MString\to\MSpin$. Since this map is a $K(1)$-local
equivalence for all $p$ and the $p$-completion of $\KO$ is $K(1)$-local, it might seem surprising that in fact most maps do not such factor (\Cref{thm:stringspinko}).

\section*[Acknowledgment]{Acknowledgment}
The unpublished PhD thesis \cite{nerf_phd} of Christian Nerf contains preliminary
observations on the problems addressed here.
The second author would like to thank Christian Nerf. It was his PhD thesis which drew his attention to the set of problems studied in this paper. He is also grateful for remarks and comments by Uli Bunke, Thomas Nikolaus and Michael V\"olkl. Last but not least, he would like to thank Guido Kings for introducing him to the beautiful subject of $p$-adic interpolation. The first author thanks Charles Rezk for suggesting \Cref{thm:stringspinko}. We also thank an anonymous referee for helpful comments improving the readability.

\section{The arithmetic of simultaneous Kummer congruences}

\subsection[Polynomial Banach bases]{Polynomial Banach bases}

Fix a prime $p$. The aim of this section is to give a general principle for the construction of an orthonormal basis of the $\Qp$-Banach spaces of continuous maps
\[
	\Cont(\Zp^*,\Qp)\text{ and }\Cev(\Zp^*,\Qp):=\{f\in\Cont(\Zp^*,\Qp) \mid f(-x)=f(x)\} .
\]

As an application, we construct an explicit orthonormal basis consisting of polynomial functions. These will serve as a substitute of the familiar Mahler basis ${  \binom{X}{n}}_{n\geq 0}$ of $\Cont(\Zp,\Qp)$ and will lead to explicit congruences characterizing 
the sequences of moment of measures on $\Zp^*/ \{ \pm 1\}$, see \Cref{cor2}.

We start with some recollections on $p$-adic functional analysis, as
initiated by J-P. Serre \cite{serre1}. Let $|\cdot|$ denote the valuation of $\Qp$ normalized by $|p|=1/p$.

\begin{defin}
	A $\Qp$-Banach space $(E,\Vert\cdot\Vert)$ is a vector space $E$ over $\Qp$, which is complete with respect to an {\em ultrametric} norm $\Vert\cdot\Vert$, i.e. the norm satisfies
	\[
		\Vert v+w \Vert\leq \max (\Vert w\Vert,\Vert v\Vert).
	\]
	We also require that the value group of $\Vert\cdot\Vert$ be contained in that of $|\cdot|$.
\end{defin}

Our main examples in the following will be $(\Cev(\Zp^*,\Qp),\Vert \cdot \Vert_{\mathrm{sup}})$ and $(\Cont(\Zp^*,\Qp),\Vert \cdot \Vert_{\mathrm{sup}})$, the space of continuous functions with the supremum norm.

\begin{defin}
	A sequence $(v_k)_{k\geq 0}$ in a $\Qp$-Banach space $(E,\Vert\cdot\Vert)$ is called an \emph{orthonormal basis} if each $x\in E$ has a unique representation as
	\[
		x=\sum_{k\geq 0} c_k v_k
	\]
	with $c_k\in \Qp$,  $|c_k|\rightarrow 0$ as $k\rightarrow \infty$ and
	\[
		\Vert x \Vert = \max_{k\geq 0} |c_k|.
	\]
\end{defin}

The following result will be useful.

\begin{lem}[{\cite[Lemme 1]{serre1}}]\label{lem:lem1}
Let $(E,\Vert\cdot\Vert)$ be a $\Qp$-Banach space and set $E_0:=\{x\in E: \Vert x\Vert \leq 1\}$ and $\overline{E}:=E_0/p E_0$. A sequence $(v_n)_{n\geq 0}$ is a orthonormal basis for $E$ if and only if $v_n\in E_0$ for all $n\geq 0$, and the reductions $\overline{v_n}\in\overline{E}$ form an $\Fp$-basis $\{\overline{v_n} \}_{n\ge 0}$ for $\bar{E}$.
\end{lem}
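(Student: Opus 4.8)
The plan is to prove the two implications separately, using the ultrametric inequality and completeness throughout. First I would prove the easy direction: suppose $(v_n)_{n\geq 0}$ is an orthonormal basis. Then by definition every $v_n$ has norm $1$ (write $v_n = \sum_k c_k v_k$ with $c_k = \delta_{nk}$, so $\Vert v_n\Vert = \max_k |c_k| = 1$), hence $v_n \in E_0$. To see that the reductions $\bar v_n$ span $\bar E$: given $\bar x \in \bar E$ lift it to $x \in E_0$, expand $x = \sum_k c_k v_k$ with $|c_k| \le \Vert x\Vert \le 1$; since $|c_k| \to 0$ only finitely many $c_k$ are units, so $x \equiv \sum_{k \in S} c_k v_k \pmod{pE_0}$ for a finite set $S$, giving $\bar x$ as an $\Fp$-combination of the $\bar v_n$ (reducing each $c_k$). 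For linear independence: if $\sum_{k \in S} \lambda_k \bar v_k = 0$ in $\bar E$ with $\lambda_k \in \Fp$, lift to $a_k \in \ZZ_p$ and note $\Vert \sum_{k\in S} a_k v_k\Vert = \max_k |a_k| \le 1/p$ by orthonormality, which forces every $a_k \in p\ZZ_p$, i.e. $\lambda_k = 0$.

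For the converse, assume $v_n \in E_0$ and $(\bar v_n)$ is an $\Fp$-basis of $\bar E$. The key step is an approximation argument: I claim that for every $x \in E_0$ one can construct coefficients $c_k \in \ZZ_p$ with $|c_k| \to 0$ and $x = \sum_k c_k v_k$, together with the norm identity. One produces these coefficients digit by digit. Since $\bar v_n$ spans $\bar E$, write $\bar x = \sum_{k} \bar c_k^{(0)} \bar v_k$ (finite sum, $c_k^{(0)} \in \{0,1,\dots,p-1\}$ lifts); then $x - \sum_k c_k^{(0)} v_k \in pE_0$, so $\tfrac1p(x - \sum_k c_k^{(0)} v_k) \in E_0$ and we repeat to extract the next digit. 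Setting $c_k = \sum_{j\ge 0} c_k^{(j)} p^j$ gives a convergent series (completeness of $E$) with $x = \sum_k c_k v_k$; the condition $|c_k| \to 0$ holds because at each finite stage only finitely many indices are involved and the "tail" lies in $p^j E_0$. This establishes existence of the expansion for $x \in E_0$, hence for all $x \in E$ by scaling (here the hypothesis that the value group of $\Vert\cdot\Vert$ lies in that of $|\cdot|$ is used to reduce to the norm-$\le 1$ case). Uniqueness and the equality $\Vert x\Vert = \max_k |c_k|$ follow from the ultrametric inequality: clearly $\Vert \sum c_k v_k\Vert \le \max|c_k|$; if this were strict, dividing by the largest $|c_k|$ would give an element of $pE_0$ whose reduction is a nontrivial $\Fp$-relation among the $\bar v_k$, contradicting independence.

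The main obstacle is the convergence bookkeeping in the converse direction: one must check simultaneously that the digit-extraction process terminates appropriately in the sense that $|c_k| \to 0$ as $k \to \infty$ (not just that each individual series $\sum_j c_k^{(j)}p^j$ converges), and that the double series can be rearranged into $\sum_k c_k v_k$. This is handled by observing that after $j$ steps the remainder lies in $p^j E_0$, so the coefficients $c_k^{(\ell)}$ for $\ell < j$ are already determined, and the partial sums $\sum_{k} (\sum_{\ell<j} c_k^{(\ell)}p^\ell) v_k$ form a Cauchy sequence in $E$; combined with the fact that at stage $j$ the set of active indices $k$ is finite, one gets the vanishing of $|c_k|$ at infinity. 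I would present this as the one genuinely delicate point and treat the ultrametric estimates as routine.
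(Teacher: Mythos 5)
The paper does not give a proof of this lemma; it cites it directly from Serre's \emph{Endomorphismes compl\`etement continus des espaces de Banach $p$-adiques} (Lemme 1). Your blind proof is correct and is essentially the standard argument found there: the easy direction reduces norms of coefficient sequences via orthonormality, and the converse uses a digit-by-digit lifting with remainder in $p^jE_0$, with the observation that at each stage only finitely many indices are active (so $|c_k|\to 0$) and the norm identity follows by reducing a would-be counterexample modulo $pE_0$ to contradict linear independence. Nothing is missing; you have correctly isolated the one genuinely delicate bookkeeping point.
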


We also recall the notion of a $p$-adic measure.

\begin{defin}
	Let $R$ be a commutative ring which is complete and separated in its $p$-adic topology, i.e.
	the canonical map $R\to\lim_n R/p^nR$ is an isomorphism. Given a pro-finite abelian group $G$, {\em an $R$-valued measure} $\mu$ on $G$ is an $R$-linear map:
	\[
		\begin{tikzcd}
			\mu\colon \Cont(G,R) \rightarrow R.
		\end{tikzcd}
	\]
	Note that such a map is automatically continuous because one easily checks that the compact-open topology on $\Cont(G,R)$ is the $p$-adic one. For given $f\in \Cont(G,R)$, we will often use the following notation:
	\[
		\int_{G}{fd\mu}:=\mu(f).
	\]
	The set of all $R$-valued measures on $G$ will be denoted $\meas(G,R)$. Convolution of measures
	\[
		(\mu,\nu)\mapsto \mu*\nu
	\]
	defined by
		\[
		(\mu*\nu)(f):=\int_{G}\int_{G}f(xy)d\mu(x)d\nu(y)
	\]
	gives $\meas(G,R)$ the structure of an $R$-algebra.
Finally, recall that given a measure $\mu$, for every $n\in\mathbb{Z}$ the $n$-th moment of $\mu$ is defined to be $\int_{G}{ x^n\,d\mu(x)}\in R$.
\end{defin}

\begin{rem}
	In our applications, the pro-finite group $G$ will be either $G=\Zp^*$ or $G=\G$ and the coefficient ring $R$ will be either $\Zp$ or the ring of $p$-adic modular forms $\MFp$.
\end{rem}

\subsection[The $p$-adic digit principle]{A variant of the $p$-adic digit principle}
Recall the fixed prime $p$. Our construction here is an adaptation of what K. Conrad calls \emph{digit principle} in \cite{conrad1}.\par
For the formulation we need the following definition:
\begin{defin}
	Each integer $n\geq 0$ can be uniquely written in the form
	\[
		n=a_0 + \sum_{i\geq 1} a_i \varphi(p^i) \quad a_0\in\{0,...,p-2\}, a_i\in\{0,...,p-1\, \} \,\,(i\ge 1),
	\]
	where $\varphi$ denotes Euler's totient function. We call this expansion the {\em $\varphi$-adic expansion of $n$ to the base $p$}.
\end{defin}
\begin{proof}
	Recall that $\varphi(p^i)=(p-1)p^{i-1}$ $(i\ge 1)$. The claim now follows immediately by choosing the unique $a_0\in\{0,...,p-2\}$ such that $(p-1)$ divides $(n-a_0)$ and then using the $p$-adic expansion of $\frac{n-a_0}{(p-1)}$.
\end{proof}

\begin{exmpl}
		In case $p=2$ we always have $a_0=0$, and the $\varphi$-adic expansion of $n=\sum_{i\geq 1}a_i 2^{i-1}$ to the base $2$ is simply the $2$-adic expansion with index shifted by $1$.
\end{exmpl}

Now assume we are given a family of functions $(e_j)_{j\geq 0}$ with $e_j\in\Cont(\Zp^*,\Qp)$. We define a new sequence $(E_n)_{n\geq0}$ by
\[
	E_n:=\prod_{j\geq 0} e_j^{a_j}\in\Cont(\Zp^*,\Qp),
\]
where the $a_j$ are the digits in the $\varphi$-adic expansion of $n=a_0 + \sum_{j\geq 1} a_j \varphi(p^j)$. We call $(E_n)_{n\ge 0}$ the {\em extension of $(e_j)_{j\ge 0}$ by $\varphi$-digits}. 
\begin{exmpl}
We have $E_{\varphi(p^j)}=e_j$ for $j\geq 1$, $E_0=1$ and
		\[
				E_1=
				\begin{cases}
					e_0 & p\neq 2\\
					e_1 & p=2.
				\end{cases}
		\]
\end{exmpl}

The following result will be referred to as the \emph{$\varphi$-adic digit principle}. It is an adaptation of \cite[Theorem 3]{conrad1}.

\begin{thm}\label{thm1}
	Let $(e_j)_{j\geq 0}$, $e_j\in\Cont(\Zp^*,\Qp)$ be a sequence of functions satisfying the following three conditions:
	\begin{enumerate}
		\item For all $j\geq0$: $e_j(\Zp^*)\subseteq \Zp$ and $e_0(\Zp^*)\subseteq \Zp^*$.
		\item For all $n\geq 1$ the reductions $(\bar{e}_{j})_{0\leq j\leq n-1}\in \Cont(\Zp^*,\Fp)$ are constant on all cosets of $1+p^n\Zp\subseteq\Zp^*$.
		\item For all $n\ge 1$, the map
		\[
		\begin{tikzcd}[row sep=tiny]
			\Zp^*/(1+p^n\Zp) \arrow{r} & \Fp^*\times \Fp^{n-1} \\
			x \arrow[mapsto]{r} & (\bar{e}_{0}(x),(\bar{e}_{j}(x))_{1\leq j\leq n-1})
		\end{tikzcd}
		\]
		is bijective.
	\end{enumerate}
	Then the sequence $(E_n)_{n\geq 0}$ obtained by extension of $(e_j)_{j\geq 0}$ by $\varphi$-digits is an orthonormal basis for $\Cont(\Zp^*,\Qp)$. If, moreover, the functions $E_{2n}$	are even for $n\geq 0$ then $(E_{2n})_{n\geq 0}$ is an orthonormal basis for $\Cev(\Zp^*,\Qp)$.
\end{thm}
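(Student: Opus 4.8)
The plan is to deduce everything from Serre's criterion, Lemma \ref{lem:lem1}, applied first to the $\Qp$-Banach space $E=\Cont(\Zp^*,\Qp)$ with its sup-norm. Its unit ball is $E_0=\Cont(\Zp^*,\Zp)$, and since $\Zp^*$ is compact and $\Fp$ discrete, reduction modulo $p$ identifies $\bar E=E_0/pE_0$ with the $\Fp$-algebra $\Cont(\Zp^*,\Fp)$ of locally constant functions, which is the increasing union over $N\ge 1$ of the finite-dimensional subalgebras $\mathrm{Map}(\Zp^*/(1+p^N\Zp),\Fp)$. Each $E_n$ is a finite product of the $e_j$, so condition (a) gives $E_n(\Zp^*)\subseteq\Zp$, i.e.\ $E_n\in E_0$; by Lemma \ref{lem:lem1} it therefore remains only to show that the reductions $(\bar E_n)_{n\ge0}$ form an $\Fp$-basis of $\Cont(\Zp^*,\Fp)$.

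I would do this one level at a time. The $\varphi$-adic expansion identifies $\{0,\dots,\varphi(p^N)-1\}$ with the set of digit tuples $(a_0,\dots,a_{N-1})$ having $a_0\in\{0,\dots,p-2\}$ and $a_j\in\{0,\dots,p-1\}$ for $1\le j\le N-1$ (write $n=a_0+(p-1)m$ with $m<p^{N-1}$ and expand $m$ $p$-adically), so $\{E_n:0\le n<\varphi(p^N)\}$ is exactly $\{\prod_{j=0}^{N-1}e_j^{a_j}\}$ over those tuples. By condition (b) each $\bar e_j$ with $j\le N-1$ factors through $\Zp^*/(1+p^N\Zp)$, hence so do these $\bar E_n$, and they all lie in the $\varphi(p^N)$-dimensional space $\mathrm{Map}(\Zp^*/(1+p^N\Zp),\Fp)$. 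Now use the bijection of condition (c) to identify $\Zp^*/(1+p^N\Zp)$ with $\Fp^*\times\Fp^{N-1}$; under it $\bar e_0$ becomes the coordinate function $y_0$ on $\Fp^*$ and $\bar e_j$ the $j$-th coordinate $y_j$ on $\Fp^{N-1}$, so the $\bar E_n$ become precisely the monomials $y_0^{a_0}y_1^{a_1}\cdots y_{N-1}^{a_{N-1}}$ in the allowed exponent range. That these monomials form an $\Fp$-basis follows from $\mathrm{Map}(\Fp,\Fp)\cong\Fp[y]/(y^p-y)$ and $\mathrm{Map}(\Fp^*,\Fp)\cong\Fp[y]/(y^{p-1}-1)$ (Fermat plus a degree count, i.e.\ Lagrange interpolation) together with the fact that a tensor product of bases is a basis. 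Since for $N'>N$ the level-$N$ monomials sit among the level-$N'$ ones, passing to the union over $N$ shows $(\bar E_n)_{n\ge0}$ is a basis of $\Cont(\Zp^*,\Fp)$, and Lemma \ref{lem:lem1} gives the first assertion.

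For the statement about $\Cev(\Zp^*,\Qp)$, under the additional hypothesis that each $E_{2n}$ is even, I would invoke Lemma \ref{lem:lem1} once more, now with $E=\Cev(\Zp^*,\Qp)$, for which $\bar E\cong\Cev(\Zp^*,\Fp)=\Cont(\Zp^*/\{\pm1\},\Fp)$. The $\bar E_{2n}$ lie in this space by hypothesis and are linearly independent, being a subset of the basis $(\bar E_n)_{n\ge0}$ already produced. To see that they span, I would again argue level by level: for $2n<\varphi(p^N)$ the function $\bar E_{2n}$ lies in $\Cev(\Zp^*,\Fp)\cap\mathrm{Map}(\Zp^*/(1+p^N\Zp),\Fp)$, whose dimension is the number of $\{\pm1\}$-orbits on $\Zp^*/(1+p^N\Zp)$; since $-1$ acts freely there whenever that group is nontrivial, this dimension coincides with the number of admissible even indices $2n<\varphi(p^N)$. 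A linearly independent family of the right size inside a finite-dimensional space is a basis, so $\{\bar E_{2n}:2n<\varphi(p^N)\}$ is a basis at each level, and letting $N\to\infty$ identifies $(\bar E_{2n})_{n\ge0}$ as an $\Fp$-basis of $\Cev(\Zp^*,\Fp)$; Lemma \ref{lem:lem1} then concludes.

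The combinatorics of $\varphi$-adic expansions and the elementary structure of function algebras over $\Fp$ and $\Fp^*$ are routine; the one place real input is used is condition (c), which is exactly what converts the abstract extension-by-digits into genuine monomials and thereby reduces the whole statement to Lagrange interpolation. For the even case the only thing to watch is the orbit/dimension count at small levels, in particular the degenerate case $p=2$, $N=1$ where $\Zp^*/(1+p^N\Zp)$ is trivial.
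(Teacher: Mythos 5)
Your proof is correct and follows essentially the same route as the paper: both apply \cref{lem:lem1}, reduce to the finite quotients $\Zp^*/(1+p^n\Zp)$ via condition (b), and combine the bijection of condition (c) with a dimension count, the even case being handled by the same $\{\pm1\}$-orbit count for $n\geq 2$. The only stylistic difference is that you transport the functions to monomials on $\Fp^*\times\Fp^{n-1}$ and invoke Lagrange interpolation abstractly, whereas the paper writes out the corresponding interpolation polynomials $h_v$ explicitly; these are the same argument.
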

\begin{proof}
We check the conditions of \Cref{lem:lem1} for $v_n:=E_n$ ($n\ge 0$).
Assumption $(a)$ and the construction of the $E_n$ from the $e_n$ implies 
 \[ E_n\in\Cont(\Zp^*,\Zp)=\{ f\in\Cont(\Zp^*,\Qp)\, \mid \, \Vert f\Vert\leq 1\}\]
 for all $n\ge 0$. We now check that the reductions  
  \[
	(\bar{E}_{n})_{n\geq 0}\in\Cont(\Zp^*,\Fp)
	\]
	of the $E_n$ are an $\Fp$-basis of $\Cont(\Zp^*,\Fp)$. 
	For every $n\ge 1$, assumption $(b)$ and the construction of the $E_m$ ($0\leq m\leq\varphi(p^n)-1$) from the $e_k$ ($0\leq k\leq n$) shows that these $E_m$ are constant on all cosets of $1+p^n\Zp\subseteq\Zp^*$. 
Since every continuous map $\Zp^*\to\Fp$ is in fact locally constant, we have
\[
		\Cont(\Zp^*,\Fp)=\varinjlim_n \mathrm{Maps}(\Zp^*/(1+p^n\Zp),\Fp),
	\]
	and it suffices to show that for every $n\ge 1$, the 
	\[
		(\bar{E}_{m})_{0\leq m\leq \varphi(p^n)-1} \in \mathrm{Maps}(\Zp^*/(1+p^n\Zp),\Fp)
	\]
	are an $\Fp$-basis of $\mathrm{Maps}(\Zp^*/(1+p^n\Zp),\Fp)$. By counting dimensions we are further reduced to showing that the family $(\bar{E}_{m})_{0\leq m\leq \varphi(p^n)-1}$ spans $\mathrm{Maps}(\Zp^*/(1+p^n\Zp),\Fp)$. To see this, define functions $h_v\in \mathrm{Maps}(\Zp^*/(1+p^n\Zp),\Fp)$ for $v\in \Zp^*/(1+p^n\Zp)$ by
	\[
		h_v(w):=\prod_{a\in \Fp^*\setminus \{1\}} \left( \frac{\bar{e}_0(w)}{\bar{e}_0(v)}-a \right)\prod_{j=1}^{n-1}\left(1-(\bar{e}_{j}(w)-\bar{e}_{j}(v))^{p-1}\right)
	\]
	with the usual convention that the value of an empty product is $1$. Expanding the product defining $h_v$ shows that $h_v$ is a linear combination of monomials in the $\bar{e}_j$. Since the exponent of $\bar{e}_0$ is always between $0$ and $p-2$ and all other exponents are at most $ p-1$, it is obvious that $h_v$ is a $\Fp$-linear combination of the $(\bar{E}_m)_{0\leq m\leq \varphi(p^n)-1}$. But the definition of $h_v$ and assumption $(c)$ shows
	\[
		h_v(w)=
		\begin{cases} 
		-1 & \mathrm{if}\quad v=w\\
		0 & \mathrm{else}
		\end{cases},
	\]
	so the $(\bar{E}_m)_{0\leq m\leq \varphi(p^n)-1}$ span $\mathrm{Maps}(\Zp^*/(1+p^n\Zp),\Fp)$, as desired.\par
	For the additional claim about $\Cev(\Zp^*,\Qp)$, note that for $n\geq 2$ the $\frac{\varphi(p^n)}{2}$ functions $(\bar{E}_{2m})_{0\leq 2m\leq \varphi(p^n)-2}$ are linearly independent in $\mathrm{Maps}(\Zp^*/(1+p^n\Zp),\Fp)$ by the above argument. But since they are contained in the $\frac{\varphi(p^n)}{2}$-dimensional subspace $\mathrm{Maps}^\mathrm{even}(\Zp^*/(1+p^n\Zp),\Fp)$ they form an $\Fp$-basis for $\mathrm{Maps}^\mathrm{even}(\Zp^*/(1+p^n\Zp),\Fp)$. So the claim follows by \Cref{lem:lem1} and the observation that 
	\[
		\Cev(\Zp^*,\Fp)=\varinjlim_n \mathrm{Maps}^\mathrm{even}(\Zp^*/(1+p^n\Zp),\Fp).
	\]
\end{proof}

\begin{rem}
	The canonical projection
	\[
		\begin{tikzcd}
			\Zp^* \ar[two heads]{r}{\mathrm{pr}} & \G
		\end{tikzcd}
	\]
	induces an isomorphism of $p$-adic Banach spaces
	\[
		\begin{tikzcd}
			\Cont(\G,\Qp) \ar{r}{\cong} & \Cev(\Zp^*,\Qp).
		\end{tikzcd}
	\]
	We will freely use this identification to interpret even functions on $\Zp^*$ as functions on $\G$, and conversely.
\end{rem}

\subsection{Construction of an orthonormal basis}\label{subset:onb}
Fix a prime number $p$. In this section, we will give explicit examples of orthonormal bases for $\Cev(\Zp^*,\Qp)$ and $\Cont(\Zp^*,\Qp)$ consisting of polynomials,
and we will use them to study the congruences characterizing the moments of measures on $\G$, i.e. the ``generalized Kummer congruences'' of \cite[Definition 9.6]{andohopkinsrezk}.

\begin{definition}\label{def:e_and_E}
For an integer $a\in\mathbb{Z}/p^j \mathbb{Z}$ let us write $\tilde{a}$ for the unique lift of $a$ in the set of representatives
\[
	\left\{-\left\lfloor\frac{p^j}{2}\right\rfloor, ..., p^j-1-\left\lfloor\frac{p^j}{2}\right\rfloor \right\}.
\]
For odd primes this set is symmetric with respect to zero.
\begin{itemize}
\item[i)]\label{it:e} For $j\geq 0$, define  polynomials $e_j^{(p)}\in\mathbb{Q}[X]$ by
\begin{equation*}
\begin{split}
	e^{(p)}_0&:=X,\\
	e^{(p)}_j&:=\frac{1}{(p^j)!}\prod_{a\in(\mathbb{Z}/p^j\mathbb{Z})^\times } (X-\tilde{a}).
\end{split}
\end{equation*}
\item[ii)]\label{it:E} Next, define by extension of $\varphi$-digits
\[
	E^{(p)}_n:=\prod_{j\geq 0} e_j^{a_j},\quad\mbox{ if } n=a_0 + \sum_{j\geq 1} a_j \varphi(p^j),\, n\geq 0.
\]
\end{itemize}
\end{definition}

Note that each $E^{(p)}_n$ is a polynomial of degree $n$ with coefficients in $\QQ$, and that $E^{(p)}_{n}(X)$ is even if $n$ is even.

 Before we prove that $(E^{(p)}_n)_{n\geq 0}$ is an orthonormal basis for $\Cont(\Zp^*,\Qp)$, we recall the following congruence due to E. Lucas:
\begin{lem}[\cite{lucas}]
	Let $p$ be a prime and $n,m\geq 0$ and $0\leq n_0,m_0 \leq p-1$ integers. Then the following congruence holds
	\[
		\binom{pn+n_0}{pm+m_0}\equiv \binom{n_0}{m_0}\binom{n}{m} \mod p
	\]
	with the usual convention that $\binom{a}{b}=0$ if $a<b$.
\end{lem}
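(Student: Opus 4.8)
The plan is to prove the congruence by comparing coefficients of a polynomial identity in $\Fp[X]$. First I would record the Frobenius-type congruence $(1+X)^p \equiv 1 + X^p \pmod{p}$ in $\ZZ[X]$, which holds because every intermediate binomial coefficient $\binom{p}{i}$ with $0 < i < p$ is divisible by $p$. Since reduction modulo $p$ is a ring homomorphism $\ZZ[X] \to \Fp[X]$, this yields the identity $(1+X)^p = 1 + X^p$ in $\Fp[X]$.

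Next, setting $N := pn + n_0$ with $0 \le n_0 \le p-1$, I would factor $(1+X)^N = \bigl((1+X)^p\bigr)^{n}\,(1+X)^{n_0}$ and reduce modulo $p$ to obtain
\[
	(1+X)^{N} \equiv (1+X^p)^{n}\,(1+X)^{n_0} \pmod{p}
\]
in $\Fp[X]$. Expanding the right-hand side as $\bigl(\sum_{k\ge 0}\binom{n}{k}X^{pk}\bigr)\bigl(\sum_{j=0}^{n_0}\binom{n_0}{j}X^{j}\bigr)$, the coefficient of $X^{pm+m_0}$ is $\sum \binom{n}{k}\binom{n_0}{j}$, the sum running over all $k,j \ge 0$ with $pk + j = pm + m_0$.

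The key observation — and essentially the only point requiring attention — is uniqueness of base-$p$ digits: since $0 \le m_0 \le p-1$ and $0 \le j \le n_0 \le p-1$, the equation $pk + j = pm + m_0$ forces $j \equiv m_0 \pmod{p}$, hence $j = m_0$ and then $k = m$. Thus the coefficient of $X^{pm+m_0}$ on the right is exactly $\binom{n}{m}\binom{n_0}{m_0}$, whereas on the left it is $\binom{N}{pm+m_0} = \binom{pn+n_0}{pm+m_0}$; comparing the two gives the claimed congruence. The boundary conventions $\binom{a}{b}=0$ for $a<b$ are automatically respected, since $\binom{n_0}{j}$ vanishes for $j>n_0$ and $(1+X)^N$ has degree $N$. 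There is no serious obstacle here: once the Frobenius congruence is in place the argument is purely formal, and the only care needed is to check that no pair $(k,j)$ other than $(m,m_0)$ contributes to the relevant coefficient.
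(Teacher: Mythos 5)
Your proof is correct, and it is the standard generating-function argument for Lucas' congruence: establish the Frobenius identity $(1+X)^p = 1 + X^p$ in $\Fp[X]$, factor $(1+X)^{pn+n_0}$, expand, and compare coefficients of $X^{pm+m_0}$, using uniqueness of the base-$p$ digit to see that only the pair $(k,j)=(m,m_0)$ contributes. The paper does not supply its own proof of this lemma — it is stated as a citation to Lucas' original work — so there is nothing to compare against; your argument is a complete and self-contained justification of the cited result.
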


\begin{cor}\label{cor3}
	Let $n\geq 0$ be an integer and $p$ a prime. If
	\[
		n=\sum_{j\geq 0} n_j p^j\quad 0\leq n_j \leq p-1
	\]
	is the $p$-adic expansion of $n$ then, for all $j\ge 0$, we have
	\[
		\binom{n}{p^j}\equiv n_j \mod p.
	\]
\end{cor}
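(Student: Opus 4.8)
The plan is to deduce this directly from the single‑step Lucas congruence in the preceding lemma, applied $j$ times; equivalently, one argues by induction on $j$. Recall that the base‑$p$ digits of $\lfloor n/p\rfloor$ are exactly $(n_{k+1})_{k\ge 0}$, so passing from $n$ to $\lfloor n/p\rfloor$ shifts the digit we want to isolate by one.

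First I would dispose of the case $j=0$: here $\binom{n}{p^0}=\binom{n}{1}=n$, and since $n=n_0+p\lfloor n/p\rfloor$ we have $n\equiv n_0\bmod p$, as claimed. For $j\ge 1$, write
\[
	n = p\cdot\lfloor n/p\rfloor + n_0,\qquad p^{j}=p\cdot p^{j-1}+0,
\]
with $0\le n_0\le p-1$. Applying the lemma with this decomposition gives
\[
	\binom{n}{p^{j}}\equiv \binom{n_0}{0}\binom{\lfloor n/p\rfloor}{p^{j-1}} = \binom{\lfloor n/p\rfloor}{p^{j-1}}\ \bmod p.
\]
Since the $(j-1)$‑st base‑$p$ digit of $\lfloor n/p\rfloor$ is $n_j$, the inductive hypothesis applied to $\lfloor n/p\rfloor$ identifies the right‑hand side with $n_j$ modulo $p$, completing the induction. (Unravelled, this is just the iteration $\binom{n}{p^j}\equiv\binom{\lfloor n/p\rfloor}{p^{j-1}}\equiv\cdots\equiv\binom{\lfloor n/p^{j}\rfloor}{1}=\lfloor n/p^{j}\rfloor\equiv n_j\bmod p$.)

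I do not expect any genuine obstacle here; the only points requiring a word of care are the separate treatment of $j=0$ (so that $p^{j-1}$ makes sense) and the convention $\binom{a}{b}=0$ for $a<b$, which makes the formula self‑consistent when $p^{j}>n$: then both sides vanish, in accordance with $n_j=0$.
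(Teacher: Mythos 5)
Your proof is correct and follows essentially the same route as the paper: both iterate the single-step Lucas congruence, the paper first deriving the full product formula $\binom{n}{m}\equiv\prod_i\binom{n_i}{m_i}$ and then specializing to $m=p^j$, while you specialize at each step of the induction. The two are the same argument up to presentation.
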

\begin{proof}
	For integers $n,m\geq0$ with $p$-adic expansion $n=\sum_{j\geq 0}^{r} n_j p^j$ and $m=\sum_{j\geq 0}^{r} m_j p^j$ iterated application of Lucas' congruence gives
	\[
		\binom{n}{m}\equiv \binom{n_0}{m_0}\cdot...\cdot\binom{n_r}{m_r}\mod p.
	\]
	The special case $m=p^j$ gives the desired result.
\end{proof}

We can now check the assumptions of \Cref{thm1} for the $(e^{(p)}_j)_{j\geq 0}$ defined in \Cref{def:e_and_E}.
\begin{lem}\label{lem2} We have:
\begin{enumerate}
	\item For all $j\geq 0$: $e^{(p)}_j(\Zp^*)\subseteq \Zp$ and $e^{(p)}_0(\Zp^*)\subseteq \Zp^*$.
	\item For all $n\geq 1$ the reductions $(\bar{e}^{(p)}_{j})_{0\leq j\leq n-1}\in \Cont(\Zp^*,\Fp)$ are constant on all cosets of $1+p^n\Zp\subseteq\Zp^*$.
	\item For all $n\ge 1$, the map
	\[
	\begin{tikzcd}[row sep=tiny]
		\phi\colon \Zp^*/(1+p^n\Zp) \ar{r} & \Fp^*\times \Fp^{n-1} \\
		x \ar[mapsto]{r} & (\bar{e}^{(p)}_{0}(x),(\bar{e}^{(p)}_{j}(x))_{1\leq j\leq n-1})
	\end{tikzcd}
	\]
	is bijective.
\end{enumerate}
\end{lem}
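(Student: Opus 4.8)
The plan is to reduce all three assertions to elementary properties of $p$-adic digits, by first rewriting each $e^{(p)}_j$ with $j\ge 1$ as an integer-valued binomial polynomial times a $p$-adic unit. For $p$ odd this rests on the identity
\[
	X\prod_{a=1}^{(p^j-1)/2}(X^2-a^2)=\prod_{c=-(p^j-1)/2}^{\,(p^j-1)/2}(X-c)=(p^j)!\,\binom{X+\tfrac{p^j-1}{2}}{p^j},
\]
the middle term being a product of $p^j$ consecutive linear factors. Separating off the factors with $p\mid a$, which contribute $X\prod_{b=1}^{(p^{j-1}-1)/2}(X^2-p^2b^2)$, gives
\[
	e^{(p)}_{j}(X)=\binom{X+\tfrac{p^j-1}{2}}{p^j}\ \Big/\ \Big(X\prod_{b=1}^{(p^{j-1}-1)/2}(X^2-p^2b^2)\Big)\qquad(p\ \text{odd},\ j\ge 1).
\]

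For $p=2$ and $j\ge 2$ the analogous computation uses that the $2^{j-1}$ half-integers $\tfrac{X\pm a}{2}$ with $a$ odd and $1\le a\le 2^{j-1}-1$ form a block of consecutive integers, so that
\[
	\prod_{\substack{a\ \text{odd}\\ 1\le a\le 2^{j-1}-1}}\!\!(X^2-a^2)=2^{2^{j-1}}(2^{j-1})!\,\binom{\tfrac{X-1}{2}+2^{j-2}}{2^{j-1}},\qquad e^{(2)}_{j}(X)=\frac{2^{2^{j-1}}(2^{j-1})!}{(2^j)!}\binom{\tfrac{X-1}{2}+2^{j-2}}{2^{j-1}};
\]
a count with Legendre's formula shows the scalar $2^{2^{j-1}}(2^{j-1})!/(2^j)!$ has $2$-adic valuation $0$, hence is a unit, and $e^{(2)}_1=\tfrac{X+1}{2}$ is given. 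Part $(a)$ is then formal: $e^{(p)}_0=X$ maps $\Zp^*$ into $\Zp^*$; for $j\ge 1$ the binomial polynomial is integer-valued, so maps $\Zp$ into $\Zp$, while the accompanying denominator is a unit on $\Zp^*$ because $x^2-p^2b^2\equiv x^2\not\equiv 0\bmod p$ there; hence $e^{(p)}_j(\Zp^*)\subseteq\Zp$. Moreover, the same congruence together with Fermat's little theorem shows that this denominator reduces mod $p$ to $\bar{x}^{\,p^{j-1}}=\bar{x}$, so for $p$ odd $\bar{e}^{(p)}_{j}(x)=\bar{x}^{-1}d_j$, where $d_j\in\Fp$ is the $j$-th $p$-adic digit of $x+\tfrac{p^j-1}{2}$; here one uses \cref{cor3}, extended from $\NNo$ to all of $\Zp$ by continuity (both sides being locally constant functions of the relevant $p$-adic integer). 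For $p=2$ one likewise gets that $\bar{e}^{(2)}_{j}(x)$ is the $(j-1)$-st binary digit of $\tfrac{x-1}{2}+2^{j-2}$ (the $0$-th binary digit of $\tfrac{x+1}{2}$ when $j=1$), and in every case $\bar{e}^{(p)}_{0}(x)=\bar{x}$.

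Part $(b)$ then follows immediately: since $\tfrac{p^j-1}{2}=\sum_{i=0}^{j-1}\tfrac{p-1}{2}p^i$ has vanishing digits in positions $\ge j$, the $j$-th digit of $x+\tfrac{p^j-1}{2}$ equals $x_j$ plus the carry out of position $j-1$, a carry determined by $x_0,\dots,x_{j-1}$ alone; thus $\bar{e}^{(p)}_{j}$ depends only on $x\bmod p^{j+1}$ (and $\bar{e}^{(p)}_{0}$ only on $x\bmod p$), so it is constant on cosets of $1+p^n\Zp$ whenever $n\ge j+1$, and the $p=2$ case is identical with $x$ replaced by $\tfrac{x-1}{2}$. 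For part $(c)$, both source and target have $\varphi(p^n)$ elements, so it suffices to prove injectivity: $\bar{e}_0(x)$ recovers $x_0$, and then $x_1,\dots,x_{n-1}$ are recovered one at a time, because at stage $j$ the quantity $\bar{x}\,\bar{e}_j(x)$ equals $(x_j+\text{carry})\bmod p$ with the carry already known; equivalently, for $p=2$ the self-map $(Y_0,\dots,Y_{n-2})\mapsto(1+Y_0,\,Y_0+Y_1,\,\dots,\,Y_{n-3}+Y_{n-2})$ of $\Fzwei^{\,n-1}$, with $Y=\tfrac{x-1}{2}$, is unitriangular and hence bijective.

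The only real obstacle is establishing the two factorization identities — above all the $p=2$ case: recognizing the relevant product of half-integers as a block of consecutive integers and verifying that the correcting scalar $2^{2^{j-1}}(2^{j-1})!/(2^j)!$ is a $2$-adic unit. Once these identities are in place, $(a)$ is immediate and $(b)$, $(c)$ reduce to the routine carry bookkeeping sketched above.
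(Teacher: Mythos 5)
Your proof is correct and follows the same underlying strategy as the paper: rewrite $e^{(p)}_j$ as a binomial polynomial times a unit-valued factor, invoke Lucas' congruence (Corollary~\ref{cor3}) to read off the reduction mod~$p$ as a $p$-adic digit, and then deduce~(a)--(c). The implementation differs at two points worth noting. First, for $p$~odd you use exactly the paper's identity~(\ref{lemeq1}) (your $\binom{X+(p^j-1)/2}{p^j}$ divided by $X\prod(X^2-p^2b^2)$ is the same expression), but for $p=2$ you pass through the change of variables $Y=\tfrac{X-1}{2}$ to obtain a cleaner factorization $e^{(2)}_j=(\text{$2$-adic unit})\cdot\binom{Y+2^{j-2}}{2^{j-1}}$, whereas the paper keeps $\binom{X+2^{j-1}}{2^j}$ with a residual polynomial factor; your version has the advantage that the unit factor is a scalar and the digit interpretation $\bar e^{(2)}_j(x)=Y_{j-1}+Y_{j-2}$ is completely explicit (the paper's equation~(\ref{lemeq1}) at $p=2$ in fact has a small sign/indexing slip in the range $I_j$, which your computation sidesteps). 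Second, for~(c) you argue injectivity directly on $\phi$ by peeling off digits one at a time (and for $p=2$ by identifying the map as a unitriangular self-map of $\Fzwei^{n-1}$), whereas the paper first proves bijectivity of an auxiliary map $\psi_n\colon\Zp/p^n\Zp\to\Fp^n$ and then transfers the result to $\phi$ via a commutative square with an explicit injection $\iota\colon\Fp^*\times\Fp^{n-1}\hookrightarrow\Fp^n$. Both routes are the same induction on the digit position; yours is more direct, while the paper's has the (mild) benefit of isolating the statement about $\psi_n$, which is of independent interest. Your remark about extending Corollary~\ref{cor3} from $\NNo$ to $\Zp$ by continuity makes explicit a point the paper uses silently.
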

\begin{proof}
		For $j\geq 1$ let us write for the set
		\[
			I_j:=\left\{-\left\lfloor\frac{p^j}{2}\right\rfloor, ..., p^j-1-\left\lfloor\frac{p^j}{2}\right\rfloor \right\}=\left\{ \tilde{a}\mid a\in(\mathbb{Z}/p^j\mathbb{Z})^\times\right\}
		\]
		of our chosen representatives
		and note the following equality in $\QQ[X]$:
		\begin{equation}\label{lemeq1}
			\begin{split}
				\binom{X+\lfloor \frac{p^j}{2} \rfloor}{p^j}& =\frac{1}{(p^j)!}\prod_{i\in I_j,(p,i)=1} (X-i)\prod_{i\in I_j,p|i} (X-i)=\\
				&=e^{(p)}_j(X)\prod_{i\in I_j,p|i} (X-i).
			\end{split}
		\end{equation}
		Also note
		\[
			\binom{X+\lfloor \frac{p^0}{2} \rfloor}{p^0}=X=e_0(X)
		\]
		for $j=0$. This allows us to deduce the lemma from the corresponding properties of the polynomials $\binom{X}{p^j}$:\par
		$(a)$ follows from \eqref{lemeq1} since $\binom{x}{p^j}\in\Zp$ and $\prod_{i\in I_j,p|i} (x-i) \in \Zp^*$ for all $x\in \Zp^*$.\par
		$(b)$ Fix $n\ge 1$, $x,y\in\Zp^*$ such that $xy^{-1}\in1+p^n\Zp$ and
		an index $0\leq j<n$. We need to check that $e^{(p)}_j(x)\equiv e^{(p)}_j(y)$ mod $p$. Our assumption implies that $x\equiv y$ mod $p^n$ , hence also $x+\lfloor \frac{p^j}{2}\rfloor \equiv y + \lfloor \frac{p^j}{2}\rfloor$ mod $p^n$, and \Cref{cor3} implies that 
\[ \binom{x+\lfloor\frac{p^j}{2}\rfloor}{p^j} \equiv 	\binom{y+\lfloor\frac{p^j}{2}\rfloor}{p^j} \mbox{ mod }p. \] 
Combining this with \eqref{lemeq1} and keeping in mind that $x\equiv y \mod p$ we get $e_j^{(p)}(x)\equiv 
e_j^{(p)}(y) \mod p$ and hence the claim.	\par

		$(c)$ To see the bijectivity of $\phi$, consider first the map
		\[
		\begin{tikzcd}[row sep=tiny]
			\psi_n \colon \Zp/p^n \Zp \ar{r} & \Fp^n\\
			x\ar[mapsto]{r} & \left(\binom{x+\lfloor \frac{p^j}{2} \rfloor}{p^j}\mod p\right)_{0\leq j\leq n-1}
		\end{tikzcd}
		\]
		which is well-defined by the above observation that $x\mapsto \binom{x}{p^j}$ for $0\leq j\leq n-1$ just depends on the coset $x+p^n\Zp$.
		We will show the bijectivity of $\psi_n$ by induction on $n$. The case $n=1$ is obvious. Let $n\geq 2$ and assume we have the bijectivity for all $i<n$. Assume we have $x,\tilde{x}\in \Zp/p^n\Zp$ with $\psi_n(x)=\psi_n(\tilde{x})$. The elements $x,\tilde{x}\in \Zp/p^n\Zp$ may be written uniquely in the form
		\begin{align*}
			x=\sum_{i=0}^{n-1} \beta_i p^i, \quad \beta_i\in\{ 0,...,p-1 \}\\
			\tilde{x}=\sum_{i=0}^{n-1} \tilde{\beta}_i p^i, \quad \tilde{\beta}_i\in\{ 0,...,p-1 \}.
		\end{align*}
		By induction we already know $\beta_i=\tilde{\beta}_i$ for $i<n-1$. Our aim is to show $\beta_{n-1}=\tilde{\beta}_{n-1}$, which will give the injectivity of $\psi_n$. Now we remark that \Cref{cor3} implies for integers $i,n\geq 0$
		\[
			\binom{y+ip^{n-1}}{p^{n-1}} \equiv i+ \binom{y}{p^{n-1}} \mod p,
		\]
		because the right hand side is the $(n-1)$-st $p$-adic digit of $y+ip^{n-1}$. Applied to our situation, this yields
		\[
			\binom{x+\lfloor \frac{p^{n-1}}{2} \rfloor}{p^{n-1}}\equiv \binom{\tilde{x}+\lfloor \frac{p^{n-1}}{2} \rfloor}{p^{n-1}} +\beta_{n-1}-\tilde{\beta}_{n-1} \mod p.
		\]
		So our assumption $\psi_n(x)=\psi_n(\tilde{x})$ implies $\beta_{n-1}\equiv \tilde{\beta}_{n-1} \mod p$ which shows $\beta_{n-1}=\tilde{\beta}_{n-1}$ as desired. So $\psi_n$ is injective and bijectivity follows by cardinality reasons.\par
		Now we can deduce the bijectivity of $\phi$. Consider the following diagram:
		\[
		\begin{tikzcd}
			\ZZ/p^n\ZZ=\Zp/p^n\Zp \ar{r}{\psi_n} & \Fp^n \\
			(\ZZ/p^n\ZZ)^*= \Zp^*/(1+p^n\Zp) \arrow[hook]{u} \ar{r}{\phi} & \Fp^*\times \Fp^{n-1} \arrow[hook]{u}{\iota}.
		\end{tikzcd}
		\]
		Here the map on the left is the natural inclusion and $\iota$ is defined by
		\[
		\begin{tikzcd}[row sep=tiny]
			\Fp^*\times \Fp^{n-1} \arrow[hook]{r}{\iota} & \Fp^n\\
			(x_0,x_1,...,x_{n-1}) \arrow[mapsto]{r} & (x_0,x_0x_1,x_0x_2,...,x_0x_{n-1}).
		\end{tikzcd}
		\]
		The commutativity of the diagram follows from \eqref{lemeq1} and implies the injectivity of $\phi$. The bijectivity follows since source and target of $\phi$ are finite sets of the same cardinality.
\end{proof}

This lemma immediately implies:
\begin{prop}\label{prop:onb}
	The family of polynomials $(E^{(p)}_{2n})_{n\geq 0}$ is an orthonormal basis for $\Cev(\Zp^*,\Qp)\cong \Cont(\G,\Qp)$, and $(E^{(p)}_{n})_{n\geq 0}$ is an orthonormal basis for $\Cont(\Zp^*,\Qp)$.
\end{prop}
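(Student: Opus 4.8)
The plan is to read off the Proposition from the $\varphi$-adic digit principle \cref{thm1}, for which \cref{lem2} supplies exactly the required input. Concretely, I would set $e_j := e^{(p)}_j$; these lie in $\Cont(\Zp^*,\Qp)$ since they are polynomials with rational coefficients, and \cref{lem2}(a),(b),(c) are verbatim the three hypotheses (a),(b),(c) of \cref{thm1}. Because, by construction, $(E^{(p)}_n)_{n\geq 0}$ is the extension of $(e^{(p)}_j)_{j\geq 0}$ by $\varphi$-digits, \cref{thm1} then hands us at once that $(E^{(p)}_n)_{n\geq 0}$ is an orthonormal basis of $\Cont(\Zp^*,\Qp)$.

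For the even part, the only thing still to check is the extra hypothesis of \cref{thm1}, that $E^{(p)}_{2n}$ is even for all $n\geq 0$ --- this is the parity remark already recorded after \eqref{eq_ei}. I would argue it as follows: write $2n = a_0 + \sum_{i\geq 1}a_i\varphi(p^i)$ in $\varphi$-adic form. For $p$ odd, every $\varphi(p^i)=p^{i-1}(p-1)$ with $i\geq 1$ is even, so $a_0$ must be even; for $p=2$ one has $a_0=0$ identically and the digit $a_1$ (the coefficient of $\varphi(2)=1$) must vanish. In either case $E^{(p)}_{2n}$ is a product of an even power of $e^{(p)}_0=X$ with various $e^{(p)}_j$, $j\geq 1$, and each such $e^{(p)}_j$ is a polynomial in $X^2$, hence even; therefore $E^{(p)}_{2n}$ is even. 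The second assertion of \cref{thm1} now gives that $(E^{(p)}_{2n})_{n\geq 0}$ is an orthonormal basis of $\Cev(\Zp^*,\Qp)$.

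Finally, I would transport this statement along the isometric isomorphism $\Cont(\G,\Qp)\xrightarrow{\cong}\Cev(\Zp^*,\Qp)$ induced by the projection $\Zp^*\twoheadrightarrow\G$ (the Remark preceding this subsection), which immediately yields that $(E^{(p)}_{2n})_{n\geq 0}$ is an orthonormal basis of $\Cont(\G,\Qp)$ as well. I do not expect any genuine obstacle: all the substance is already contained in \cref{lem2} and \cref{thm1}, and the sole delicate point is the parity bookkeeping for $E^{(p)}_{2n}$, where the prime $p=2$ has to be treated separately but causes no trouble once one observes that the leading $\varphi$-digit $a_0$ is then identically zero.
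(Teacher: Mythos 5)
Your proposal is correct and follows the same route as the paper: the paper's proof is simply ``Follows from \cref{thm1} and \cref{lem2}'', with the parity of $E^{(p)}_{2n}$ noted after \eqref{eq_ei}. Your explicit check of that parity (treating $p=2$ separately via $a_0=0$ and $a_1=0$) just spells out what the paper records as a passing remark.
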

\begin{proof}
	Follows from \Cref{thm1} and \Cref{lem2}.
\end{proof}

For studying sequences of moments starting with the $k$-th moment for some given
$k\ge 0$, we will need the following mild amplification.

\begin{cor}\label{cor1} We have
	\begin{enumerate}
		\item For fixed $k\geq 0$, the family of polynomials $(X^k E^{(p)}_{n}(X))_{n\geq 0}$ is an orthonormal basis of $\Cont(\Zp^*,\Qp)$.
		\item For every even $k\geq 0$, the family of polynomials $(X^k E^{(p)}_{2n}(X))_{n\geq 0}$ is an orthonormal basis of $\Cev(\Zp^*,\Qp)\cong\Cont(\G,\Qp)$.
	\end{enumerate}
	
\end{cor}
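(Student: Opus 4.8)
The plan is to deduce this from the Proposition above together with Serre's criterion (\cref{lem:lem1}), by observing that multiplication by $X^k$ is an isometric automorphism of the relevant Banach space. Recall from \cref{lem2}(a) that $e^{(p)}_0=X$ maps $\Zp^*$ into $\Zp^*$, so that $|x^k|=1$ for every $x\in\Zp^*$. Hence the $\Qp$-linear map
\[
	m_k\colon \Cont(\Zp^*,\Qp)\longrightarrow \Cont(\Zp^*,\Qp),\qquad f\longmapsto X^k f,
\]
is a well-defined isometry, since $\Vert X^k f\Vert_{\mathrm{sup}}=\sup_{x\in\Zp^*}|x^k f(x)|=\sup_{x\in\Zp^*}|f(x)|=\Vert f\Vert_{\mathrm{sup}}$. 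It is bijective because $x\mapsto x^{-k}$ is again a continuous function on $\Zp^*$ (inversion being continuous on $\Zp^*$), so that $m_{-k}$ is a two-sided inverse. Any isometric $\Qp$-linear automorphism carries an orthonormal basis to an orthonormal basis: this is immediate from the definition, as a representation $x=\sum_{n\geq 0}c_n v_n$ with $\Vert x\Vert=\max_n|c_n|$ is transformed into $m_k(x)=\sum_{n\geq 0}c_n\, m_k(v_n)$ with the same coefficients. Applying this to the orthonormal basis $(E^{(p)}_{n})_{n\geq 0}$ of $\Cont(\Zp^*,\Qp)$ from the Proposition above yields part (a).

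For part (b), observe that when $k$ is even the function $X^k$ is even, so $m_k$ restricts to an isometric endomorphism of the closed subspace $\Cev(\Zp^*,\Qp)$; its inverse $m_{-k}$ likewise preserves $\Cev(\Zp^*,\Qp)$, whence $m_k$ is an isometric automorphism of $\Cev(\Zp^*,\Qp)\cong\Cont(\G,\Qp)$. Since $(E^{(p)}_{2n})_{n\geq 0}$ is an orthonormal basis of $\Cev(\Zp^*,\Qp)$ by the Proposition above, its image $(X^k E^{(p)}_{2n})_{n\geq 0}$ is again one, proving (b).

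There is essentially no serious obstacle here; the only point requiring any care is the invertibility of $m_k$ with continuous inverse, which rests on $X$ being unit-valued on $\Zp^*$ (\cref{lem2}(a)) and on the continuity of inversion on $\Zp^*$. Alternatively, one can bypass the automorphism language and verify Serre's criterion directly: $X^k E^{(p)}_{n}\in\Cont(\Zp^*,\Zp)$ because $|x^k|=1$ and $E^{(p)}_{n}(\Zp^*)\subseteq\Zp$, while the reductions satisfy $\overline{X^k E^{(p)}_{n}}=\bar{X}^k\,\bar{E}^{(p)}_{n}$ with $\bar{X}$ taking values in $\Fp^*$, so multiplication by $\bar{X}^k$ is a bijective $\Fp$-linear self-map of $\Cont(\Zp^*,\Fp)$ that sends the $\Fp$-basis $(\bar{E}^{(p)}_{n})_{n\geq 0}$ to an $\Fp$-basis; \cref{lem:lem1} then gives (a), and the even case follows from the same reduction argument carried out inside $\Cev(\Zp^*,\Fp)$.
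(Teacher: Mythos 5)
Your proof is correct, and your primary argument is slightly different in character from the one in the paper, though they rest on the same underlying fact that $X$ takes unit values on $\Zp^*$. The paper works entirely at the residual level: by Lemma \ref{lem:lem1} it suffices to check that the reductions $x\mapsto x^k\bar E^{(p)}_n(x)$ for $0\le n\le \varphi(p^i)-1$ are an $\Fp$-basis of $\mathrm{Maps}(\Zp^*/(1+p^i\Zp),\Fp)$, which is immediate from the linear independence of the $\bar E^{(p)}_n$ since multiplication by the nowhere-vanishing function $\bar X^k$ is a bijection. Your alternative argument at the end of your proposal is exactly this, so you do recover the paper's proof. Your main argument, by contrast, stays at the level of the $\Qp$-Banach space: you observe that $f\mapsto X^k f$ is an isometric automorphism (with inverse $f\mapsto X^{-k}f$, both well-defined because $X^{\pm k}$ is a unit-valued continuous function on $\Zp^*$), and that such an automorphism tautologically carries orthonormal bases to orthonormal bases. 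This is a cleaner conceptual packaging of the same phenomenon, and it makes the preservation of the even subspace for $k$ even transparent; the paper's route is marginally more elementary in that it does not need to invoke continuity of the automorphism or of inversion, but it reproves in each instance a fact your version isolates once. Both are equally rigorous.
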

\begin{proof}
	$(a)$ By \Cref{lem:lem1} we are reduced to proving that the functions
	\[
		\left(x\mapsto x^k\bar{E}^{(p)}_{n}(x)\right)_{0\leq n\leq \varphi(p^i)-1}
	\]
	are linearly independent in the vector space $\mathrm{Maps}(\Zp^*/(1+p^i\Zp),\Fp)$. But this is clear from the linear independence of the functions
	\[
		\left(x\mapsto \bar{E}^{(p)}_{n}(x)\right)_{0\leq n\leq \varphi(p^i)-1}	.
	\]
The proof of $(b)$ is analogous. 
\end{proof}

We now obtain our first result characterizing sequences of moments in terms
of recursively defined and explicit congruences.

\begin{cor}\label{cor2} Fix some integer $m\geq 0$.	Let $(b_k)_{k\geq 2m}$ be an even sequence of $p$-adic integers, i.e. $b_k\in \Zp$ with $b_{k}=0$ for $k$ odd. Then the following are equivalent:
	\begin{enumerate}
	\item\label{it:exists-measure} There exists a measure $\mu\in \meas(\G,\Zp)$ with
			\[
				b_{2k}=\int_{\G}{x^{2k} d\mu}, \quad k\geq m.
			\]
	\item For each $k\geq m$ we have
			\[
				b_{2k}\equiv \sum_{i=m}^{k-1} c^{(k-m,p)}_{2(i-m)}b_{2i} \mod C_p(k-m)\Zp
			\]
			where $1/C_p(k-m)$ is the leading coefficient of $E^{(p)}_{2(k-m)}(X)$, i.e. $C_p(k-m):=\prod_{i\geq 0}(p^i!)^{a_i}$, where $2(k-m)=a_0 + \sum_{i\geq 1} a_i \varphi(p^i)$ is the expansion by $\varphi$-digits, and the $c^{(k-m,p)}_{2(i-m)}$ are integers determined by:
			\[
				  C_p(k-m) E^{(p)}_{2(k-m)}(X)X^{2m}=:X^{2k}-\sum_{i=m}^{k-1} c^{(k-m,p)}_{2(i-m)} X^{2i}.
			\]
	\end{enumerate}
\end{cor}

\begin{proof}
	Assume $(a)$. Then, for $k\geq m$ we have
	\[
			\frac{1}{C_p(k-m)}\left(b_{2k}-\sum_{i=m}^{k-1} c^{(k-m,p)}_{2(i-m)} b_{2i}\right)=\int_{\G}{E^{(p)}_{2(k-m)} (x)x^{2m}}\in \Zp,
	\]
	hence $(b)$ hold.\par
	Conversely, assume we are given a sequence $(b_k)_{k\geq 2m}$ as in $(b)$. We then define $p$-adic integers $\alpha_{2k}\in\Zp,k\geq m$, using $(b)$, by:
	\[
			\alpha_{2k}:=\frac{b_{2k}-\sum_{i=m}^{k-1} c^{(k-m,p)}_{2(i-m)}b_{2i}}{C_p(k-m)},\quad k\geq m.
	\]
	Set $\tilde{E}_k:=X^{2m}E^{(p)}_{k-2m}(X),k\geq 2m$, then $(\tilde{E}_{2k})_{k\geq m}$ is an orthonormal basis for $\Cont(\G,\Qp)$ by  \Cref{cor1}. So
	\[
			\tilde{E}_{2k} \mapsto \alpha_{2k},\quad k\geq m
	\]
	extends by linearity to a $\Zp$-linear map:
	\[
			\mu:\Cont(\G,\Zp)\rightarrow \Zp\mbox{, i.e. }\mu\in\meas(\G,\Zp).
	\]
	By construction, $\mu$ satisfies condition $(a)$.
\end{proof}

\begin{rem}\label{rem1} To check whether a given even sequence $(b_k)_{k\geq 2m}$ is a sequence of moments, there is at most one congruence condition to check for each $b_{2k},\, k\geq m$, and the congruence just depends on the $b_{2i}$ with $i<k$. In the case $p\neq 2$ the congruence condition for $b_{2k}$ is automatically true as long as  $2(k-m)<p-1$ since $v_p(C_p(k-m))=0$ in this range. In particular, a sequence $(b_k)_{k\ge 2m}$ of integers
satisfies \Cref{cor2}, \ref{it:exists-measure} {\em for all primes $p$ simultaneously} if and only if, for all $k\ge 2m$ the integer $b_{2k}$ satisfies a congruence
condition involving only the $b_{2i}$ ($m\leq i<k$), and {\em finitely many}
primes, see \Cref{thm2}, \ref{it:explicit-finite} for details, and see \Cref{ex:moments} for an explicit computation.
\end{rem}

For later reference, we give the condition \Cref{cor2}, \ref{it:exists-measure} a proper name.

\begin{defin}\label{def:measurecond}
For a prime $p$, some integer $m\geq 1$ and an even sequence $(b_k)_{k\geq 2m}$, i.e. $b_{2k+1}=0,k\geq m$, of $p$-adic integers $b_k\in\Zp$, define the following conditions:
	\begin{itemize}
		\item[\Bp] There exists a measure $\mu\in\meas(\Zp^*/\{\pm 1\},\Zp)$ such that $b_{2k}=\int_{\Zp^*/\{\pm 1\}}{x^{2k} d\mu(x)}$ for all $k\geq m$.
		\item[\Bpa] For every $c\in \Zp^*$ there exists a measure $\mu_c \in\meas(\Zp^*/\{\pm 1\},\Zp)$ such that $b_{2k}(1-c^{2k})=\int_{\Zp^*/\{\pm 1\}}{x^{2k} d\mu_c(x)}$ for all $k\geq m$.
	\end{itemize}
\end{defin}
The construction
\[
	\begin{tikzcd}[row sep=tiny]
		\meas(\G,\Zp) \ar{r} & \meas(\G,\Zp),\\
		\mu \ar[mapsto]{r} & \mu-c_{*}\mu
	\end{tikzcd}
\]
where $c_*$ is induced by multiplication with $c\in\G$, shows that condition \Bp\ implies condition \Bpa , because $c_*$ multiplies the $2k$-th moment by $c^{2k}$. 
We now check that conversely, $\Bpa$ does in fact imply $\Bp$.

\begin{prop}\label{prop_regularize} Fix a prime $p$ and let $c$ be a topological generator of $\G$.  Then,
the following sequence is exact	
\[
		\begin{tikzcd}[column sep=small]
			0 \ar{r} & \meas(\G,\Zp) \ar{r}{id-c_*} & \meas(\G,\Zp) \ar{r}{\mathrm{mass}} & \Zp \ar{r} & 0,
		\end{tikzcd}
		\]
		where $\mathrm{mass}(\mu):=\int_{\G}1\, d(\mu)$ is the total mass of the measure.
\end{prop}
\begin{proof}
First note that $\G$ is pro-cyclic, so there always exists a topological generator $c\in\G$.	We consider the following sequence:
		\begin{equation}\label{seq1}
			\begin{tikzcd}[row sep=tiny]
				0 \ar{r} & \Zp \ar{r}{\mathrm{incl}} & \Cont(\G,\Zp) \ar{r}{ \id-c^*} & \Cont(\G,\Zp)  \ar{r} & 0\, , \\
			\end{tikzcd}		
		\end{equation}
		where $c^*(f)(x):=f(cx)$, and incl is the inclusion of the constant functions.
		We claim that \eqref{seq1} is exact. The injectivity of $\mathrm{incl}$ and the inclusion $\Zp \subseteq \ker(\id-c^* )$ are clear. To see the exactness in the middle, consider some $f\in\Cont(\G,\Zp)$ with $ f-c^* f=0$. We get 
		\[
		f(x)=f(c^i x)\quad\forall x\in \G,i\in\ZZ,
		\]
		so $f$ is constant since it is constant on the dense subset $\{ c^i:i\in \ZZ\}\subseteq\G$.\par
		The first step in the proof of the surjectivity of $\id-c^*$ is to show that the cokernel of $\id-c^*$ is torsion-free: Therefore, assume given some $f\in \Cont(\G,\Zp)$ and $r\ge 0$ with  $p^rf\in \im(\id-c^*)$, i.e.
		\[
		\exists g\in \Cont(\G,\Zp) \text{ such that }  p^rf(x)=g(x)-g(cx)\quad \forall x\in \G.
		\]
		We have to show $f\in \im(\id-c^*)$. We get $g(x)\equiv g(cx) \mod p^r$ for all $x\in \G$ which implies
		\[
			 g(x)\equiv g(c^i x) \mod p^r \quad \forall x\in \G,i\in \ZZ,
		\]
		and hence that $g$ is constant $\mod p^r$, i.e. there exists $D\in \Zp$ with $g(x)\equiv D\mod p^r$.  Now we see that  $f=(\id-c^*)\left(\frac{g-D}{p^r}\right)\in \im(\id-c^*)$, as desired.\par
		Next we consider the following $\Zp$-submodules $A_n\subseteq C(\G,\Zp)$ for $n\ge 1$:
		\[
			A_n:=\{ P=\sum_{i=1}^n a_i X^{2i}\in\Qp[X]: P(\G)\subseteq \Zp  \}
		\]
		Observe that each $A_n$ is spanned by the first $n$ elements
		\[
			X^2E^{(p)}_{0}(X),...,X^2E^{(p)}_{2(n-1)}(X)
		\]
		of the orthonormal basis $(X^2E^{(p)}_{2k}(X))_{k\geq 0}$ of \Cref{cor1}. This implies that $A_n$ is a direct summand of $\Cont(\G,\Zp)$. Since $\coker(\id-c^*)$ is torsion-free this implies that also $A_n/(A_n\cap \im(\id-c^*))$ is torsion-free. But the $\Zp$-submodule of $A_n$ spanned by
		\[
				(1-c^2)X^2,...,(1-c^{2n}X^{2n})
		\]
		is of finite index in $A_n$ and contained in the image of $\id-c^*$ so it follows that $A_n= \im(\id-c^*)\cap A_n$ for all $n\geq 1$ since $A_n/(A_n\cap \im(\id-c^*))$ is torsion free. We get $\bigcup_{n\geq 1} A_n\subseteq \im(\id-c^*)$ and conclude $\im(\id-c^*)=\Cont(\G,\Zp)$ since $\bigcup_{n\geq 1} A_n$ contains an orthonormal basis as shown in \Cref{cor1}.\par
		Dualizing \eqref{seq1} gives the exactness of
		\[
		\begin{tikzcd}[row sep=tiny]
			0 \ar{r} & \meas(\G,\Zp) \ar{r}{id-c_*} & \meas(\G,\Zp) \ar{r}{\int_{\G}1 \,d(\cdot)} & \Zp,
		\end{tikzcd}
		\]
		and the surjectivity of $\mu \mapsto \int_{\G}1\, d(\mu)$ is obvious.		
\end{proof}

\begin{rem}
We sketch a different proof of \Cref{prop_regularize}. Note that $\G\cong \Delta\times \Zp$ for some cyclic group $\Delta$ of order $\frac{p-1}{2}$ if $p\neq2$ (resp. $1$ if $p=2$) and use the resulting isomorphism
	\[
			\meas(\G,\Zp)\cong \bigoplus_{\hat{\Delta}} \Zp\llbracket X \rrbracket
	\]
	given by Fourier-transformation of measures \cite{schneider-teitelbaum}. Then the exactness follows by a direct calculation of $\id-c_*$ on $\bigoplus \Zp\llbracket X \rrbracket$.
\end{rem}

\begin{cor}\label{cor_BpBpa}
	Conditions \Bp\ and \Bpa\ of \Cref{def:measurecond} are equivalent.
\end{cor}
\begin{proof}
	As noted above, \Bp\ implies \Bpa\ since $\mathrm{id}-c_*$ is multiplication by $(1-c^{2k})_{k\geq m}$ on moments.\par
	To see the converse implication, let $(b_k)_{k\geq 2m}$ be an even sequence of $p$-adic integers satisfying $\Bpa$. In particular, for a given $c\in\ZZ_p^\times$ which projects to a topological generator in $\G$ we get a measure $\mu_c$ satisfying
	\begin{equation}\label{eq:alpha}
			b_{2k}(1-c^{2k})=\int_{\G}{x^{2k}d\mu_c},\quad  k\geq m.
	\end{equation} 
We first claim that $\int_\G1\,d\mu_c=0$. We have $1=\lim_{r\to\infty} 
x^{\varphi(p^r)}$ in C$(\G,\ZZ_p)$, and hence 
\[ \int_\G1\,d\mu_c=\lim_{r\to\infty} \int_\G x^{\varphi(p^r)}\,d\mu_c\stackrel{\eqref{eq:alpha}}{=}\lim_{r\to\infty} (1-c^{\varphi(p^r)}) b_{\varphi(p^r)}=0, \]
because $c^{\varphi(p^r)}\to 1$. Now, \Cref{prop_regularize} implies the existence of a measure $\mu$ such that $\mu_c=\mu-c_*\mu$. It follows:
	\[
		(1-c^{2k})\int_{\G}{x^{2k}d\mu}=\int_{\G}{x^{2k}d\mu_c}\stackrel{\eqref{eq:alpha}}{=}(1-c^{2k})b_{2k} \, , \, \forall k\geq m.
	\]
Since $1-c^{2k}\neq 0$ for all $k\ge m$, we find $\int_\G x^{2k}\,d\mu=b_{2k}$
for all $k\ge m$, i.e. condition $\Bp$ holds.
\end{proof}

\subsection[Interpolation of integer sequences]{Simultaneous $p$-adic interpolation of integer sequences}\label{sec:simultaneous}

\subsubsection{Sequences with the Euler factor removed}

In this subsection, we study the following groups determined by simultaneous $p$-adic interpolation properties for all primes $p$.

\begin{defin}\label{def1}
	Fix some integer $m\geq 1$.	
	We will denote by $\Momeul$ the set of all integer sequences $(b_k)_{k\geq 2m}$ ($b_k\in\ZZ$), satisfying the following conditions:\footnote{The notation is to remind of {\em moments} of measures with the {\em Euler factor} removed.}
	\begin{enumerate}
		\item[\textbf{(A)}] We have $b_{2k+1}=0$ for $k\geq m$.
		\item[\textbf{(B)}] For all primes $p$, the sequence $((1-p^{k-1})b_k)_{k\geq 2m}$ satisfies  the equivalent conditions \Bpa\ and \Bp  (cf. \Cref{def:measurecond}).
	\end{enumerate}
	Obviously, $\Momeul$ is a group under addition of integer sequences.
\end{defin}

The aim of this section is the construction of an isomorphism of groups
	\[
		\begin{tikzcd}[row sep=tiny]
			\Phi_m \colon \ZZ^\NNo \ar{r}{\simeq}& \Momeul.
		\end{tikzcd}
	\]

\begin{rem}\label{rem:relevance}

In \cite[Proposition 5.19, iv)] {andohopkinsrezk}it is shown that the group of spectrum maps $[bstring,gl_1(\KO)]$ acts freely and transitively on the set of homotopy classes of $\mathbb{E}_\infty$-maps $\pi_0 \mathbb{E}_\infty (\MString,\KO)$. A trivialization of the torsor $\pi_0 \mathbb{E}_\infty (\MString,\KO)$ is given by the Atiyah-Bott-Shapiro orientation. The corresponding free and transitive action on the characteristic series of $\mathbb{E}_\infty$-String orientations is given by addition with sequences from $\Momeulzwei$. Put differently, we have a group isomorphism
\[ [ bstring, gl_1(\KO) ] \simeq \Momeulzwei\, (\stackrel{\Phi_4}{\simeq}\mathbb{Z}^{\mathbb{N}_0}). \]
The analogous result with $String$ replaced by $Spin$ and $\Momeulzwei$ replaced with $\Momeuleins$ also holds true.
\end{rem}

Our previous work leads to the following description of $\Momeul\subseteq
\mathbb{Z}^{\mathbb{N}_{\ge 2m}}$ by recursive congruences.

\begin{thm}\label{thm2}
For an even sequence $(b_k)_{k\geq 2m}$ of integers $b_k\in\mathbb{Z}$, the following are equivalent:
\begin{enumerate}
	\item We have $(b_k)_{k\geq 2m} \in \Momeul$.
	\item\label{it:explicit-finite} For all $k\geq m$ and all primes $p$ such that $p-1\leq 2(k-m)$ we have
			\[
				(1-p^{2k-1})b_{2k}\equiv \sum_{i=m}^{k-1} c^{(k-m,p)}_{2(i-m)}(1-p^{2i-1})b_{2i} \mod p^{v_p(C_p(k-m))},
			\]
			where $C_p(k-m)$ and	the $c^{(k-m,p)}_i$ are defined as in \Cref{cor2}.
	\item For every prime $p$ there is a measure $\mu_p\in\meas(\G,\Zp)$ such that for all
	$k\ge 2m$ \[ \int_{\G} x^{2k}\,d\mu_p(x)=(1-p^{2k-1})b_{2k}.\]	
		
\end{enumerate}
\end{thm}
\begin{proof}

The equivalence of conditions $(a)$ and $(b)$ is clear from \Cref{cor2} (for $(1-p^{k-1}b_k)$ in place of $(b_k)$ there) and \Cref{rem1}. Condition $(c)$ is the definition of $(a)$ (see \Cref{def1} and \Cref{def:measurecond}), restated here for the ease of reference.
\end{proof}

Next, we we will construct for every $m\geq 1$ an isomorphism of groups 
\[
		\begin{tikzcd}[row sep=tiny]
			\Phi_m\colon\ZZ^\NNo \ar{r} & \Momeul.
		\end{tikzcd}
\]
It will be given by multiplication with an integer matrix $(\Phi^{(m)}_{i,j})_{i,j\geq 0}\in \ZZ^{\NNo\times \NNo}$ with
\begin{equation}\label{eq:many-zeroes}
	\begin{split}
		\Phi^{(m)}_{2i+1,j}&=0\quad \forall i,j\geq 0\\
		\Phi^{(m)}_{2i,j}&=0\quad \forall i\geq0,j>i,
	\end{split}
\end{equation}
i.e.
\[
	(\Phi^{(m)}_{i,j})_{i,j\geq 0}=
	\left(\begin{matrix}
		\Phi^{(m)}_{0,0} & 0 & 0 & 0 & \cdots\\
		0 & 0 & 0 & 0 & \cdots \\
		\Phi^{(m)}_{2,0} & \Phi^{(m)}_{2,1} & 0 & 0 & \cdots\\
		0 & 0 & 0 & 0 & \cdots \\
		\Phi^{(m)}_{4,0} & \Phi^{(m)}_{4,1} & \Phi^{(m)}_{4,2} & 0 & \cdots\\
		0 & 0 & 0 & 0 & \cdots \\
		\vdots & \vdots & \vdots & \vdots & \ddots
	\end{matrix}\right)
\]
via
\[
	\Phi_m\left((l_i)_{i\geq 0}\right):=\left( \sum_{j\geq 0} \Phi^{(m)}_{k-2m,j} l_j \right)_{k\geq 2m}=(\Phi^{(m)}_{0,0}\cdot l_0,0,\Phi^{(m)}_{2,0}\cdot l_0+\Phi^{(m)}_{2,1}\cdot l_1,0,\ldots).
\]
The rows $\Phi^{(m)}_{i,*}$ will be defined inductively over $i$. For $i=0$ and $i=1$ define
\begin{align*}
		\Phi^{(m)}_{0,*}:&=(1,0,0,0,\cdots)\\
		\Phi^{(m)}_{1,*}:&=(0,0,0,0,\cdots).
\end{align*}
Let $k>0$ and assume we have already defined $\Phi^{(m)}_{i,*}$ for all $i<2k$. Let $S_k$ be the finite set of primes $S_k:=\{p \text{ prime}\mid p-1\leq 2k\}$. For $0\leq j < k$ let $\Phi^{(m)}_{2k,j}$ be the unique integer with $0\leq \Phi^{(m)}_{2k,j}<\prod_{p\in S_k} p^{v_p(C_p(k))}$ satisfying
\begin{equation}\label{eq:defPhi1}
	(1-p^{2(k+m)-1})\Phi^{(m)}_{2k,j} \equiv \sum_{i=0}^{k-1} c^{(k,p)}_{2i} (1-p^{2(i+m)-1})\Phi^{(m)}_{2i,j} \mod p^{v_p(C_p(k))}
\end{equation}
for all $p\in S_k$. 
\footnote{Uniqueness and existence of $\Phi_{2k,j}^{(m)}$ follow from the Chinese remainder 
theorem, using that $1-p^{2(k+m)-1}$ is a unit mod $p^{v_p(C_p(k))}$.}
Further set 
\begin{equation}\label{eq:defPhi2}
	\begin{split}
		\Phi^{(m)}_{2k,k}:&= \prod_{p\in S_k} p^{v_p(C_p(k))}\\
		\Phi^{(m)}_{2k,j}:&= 0 \quad j>k
	\end{split}
\end{equation}
and
\[
	\Phi^{(m)}_{2k+1,*}:=(0,0,0,0,\cdots).
\]
With these constructions in place, we have the following:

\begin{thm}\label{thm3}
	Let $m\geq1$ be an integer. The above map 
	\[
		\begin{tikzcd}[row sep=tiny]
			\Phi_m \colon \ZZ^\NNo \ar{r} & \Momeul \subseteq \ZZ^{\NN_{\geq 2m}}\\
			(l_i)_{i\geq 0} \ar[mapsto]{r} & \left( \sum_{j\geq 0} \Phi^{(m)}_{k-2m,j} l_j \right)_{k\geq 2m}
		\end{tikzcd}
	\]
	is a well-defined isomorphism of groups.
\end{thm}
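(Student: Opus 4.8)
The plan is to verify, in order, well-definedness, injectivity, and surjectivity of $\Phi_m$, using \cref{thm2} as the characterization of membership in $\Momeul$. Linearity and the assertion that $\Phi_m$ lands in the even sequences are immediate from the shape of the matrix $(\Phi^{(m)}_{i,j})$, so the content is entirely in these three points.

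First I would check well-definedness, i.e. that $\Phi_m((l_j)_j)\in\Momeul$ for every $(l_j)_j\in\ZZ^\NNo$. By \cref{thm2}(b) this means checking, for each $k\geq m$ and each prime $p$ with $p-1\leq 2(k-m)$, the congruence
\[
	(1-p^{2k-1})b_{2k}\equiv \sum_{i=m}^{k-1} c^{(k-m,p)}_{2(i-m)}b_{2i}(1-p^{2i-1}) \mod p^{\nu_p(C_p(k-m))},
\]
where $b_{2k}=\sum_{j\geq 0}\Phi^{(m)}_{2(k-m),j}l_j$. Since everything is $\ZZ$-linear in $(l_j)_j$, it suffices to check this for each basis vector $l=\delta_j$, i.e. to check that for each fixed $j$ the column $(\Phi^{(m)}_{2k,j})_k$ satisfies the congruence obtained by shifting indices. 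But that is almost exactly how $\Phi^{(m)}_{2k,j}$ was defined in \eqref{eq:defPhi1}: the defining congruence is precisely the required one after reindexing $k\leftrightarrow k-m$, once one notes that the extra contributions from $i\geq k$ vanish (for $i=k$ the term $\Phi^{(m)}_{2k,k}=\prod_{p\in S_k}p^{\nu_p(C_p(k))}$ is $\equiv 0\bmod p^{\nu_p(C_p(k))}$, and for $i>k$ the entries are $0$). So well-definedness reduces to matching \eqref{eq:defPhi1} with \cref{thm2}(b); I would write this matching out carefully, paying attention to the index shift by $m$ and to the fact that $S_k$ in the construction is defined via $p-1\leq 2k$, which corresponds after the shift to $p-1\leq 2(k-m)$.

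Injectivity is the easy part: the matrix $(\Phi^{(m)}_{2i,j})_{i,j\geq 0}$ is lower triangular in the variables $(i,j)$ with diagonal entries $\Phi^{(m)}_{2i,i}=\prod_{p\in S_i}p^{\nu_p(C_p(i))}\neq 0$ (with $\Phi^{(m)}_{0,0}=1$). Hence $\Phi_m$ is injective over $\QQ$, a fortiori over $\ZZ$: if $\Phi_m((l_j)_j)=0$ then reading off coefficients of $X^{2m}, X^{2(m+1)},\dots$ in order forces $l_0=l_1=\dots=0$ by the triangularity and the non-vanishing of the diagonal.

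Surjectivity is where the real work lies, and I expect it to be the main obstacle. Given $(b_k)_{k\geq 2m}\in\Momeul$, I want to produce $(l_j)_j\in\ZZ^\NNo$ with $\Phi_m((l_j)_j)=(b_k)_k$. The natural approach is to solve for the $l_j$ recursively: having matched $b_{2m},\dots,b_{2(k-1+m)}$ by choosing $l_0,\dots,l_{k-1}$, one must choose $l_k\in\ZZ$ so that $\sum_{j=0}^{k}\Phi^{(m)}_{2k,j}l_j=b_{2(k+m)}$, i.e.
\[
	\Phi^{(m)}_{2k,k}\, l_k = b_{2(k+m)} - \sum_{j=0}^{k-1}\Phi^{(m)}_{2k,j}\, l_j.
\]
Since $\Phi^{(m)}_{2k,k}=\prod_{p\in S_k}p^{\nu_p(C_p(k))}$, such an integer $l_k$ exists if and only if the right-hand side is divisible by $p^{\nu_p(C_p(k))}$ for every $p\in S_k$. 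This is exactly the place where the hypothesis $(b_k)_k\in\Momeul$ must be used: by \cref{thm2}(b) the sequence $(b_k)$ satisfies the recursive congruences mod $p^{\nu_p(C_p(k-m))}$, and by well-definedness the already-constructed partial sum $\sum_{j<k}\Phi^{(m)}_{2k,j}l_j$ (being a finite integer combination of columns, each of which satisfies the same congruences) satisfies them too; subtracting, the difference $b_{2(k+m)}-\sum_{j<k}\Phi^{(m)}_{2k,j}l_j$ is congruent mod $p^{\nu_p(C_p(k))}$ to an expression built from lower-index terms that have already been arranged to vanish — more precisely, one shows by induction that $b_{2(i+m)}=\sum_{j}\Phi^{(m)}_{2i,j}l_j$ for $i<k$, plugs these equalities into both the congruence for $b$ and the congruence \eqref{eq:defPhi1} for the columns, and subtracts to see that the required divisibility holds. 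The delicate points to get right are: (i) that the primes appearing for index $k$ are the same on both sides (again the $S_k$ bookkeeping), (ii) that only finitely many $l_j$ are ever nonzero-constrained at each stage so the construction genuinely produces an element of $\ZZ^\NNo$, and (iii) that one really has equality $b_{2(k+m)}=\sum_j\Phi^{(m)}_{2k,j}l_j$ and not merely a congruence — this is forced because $l_k$ is chosen to make the one remaining term come out exactly right, the divisibility argument only being needed to guarantee $l_k\in\ZZ$ rather than $l_k\in\ZZ[1/\!\!\prod p]$. I would present surjectivity as a single induction on $k$ carrying the inductive hypothesis ``$l_0,\dots,l_{k-1}$ have been chosen with $b_{2(i+m)}=\sum_j\Phi^{(m)}_{2i,j}l_j$ for all $i<k$,'' with the divisibility check as the inductive step.
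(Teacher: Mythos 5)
Your proposal is correct and takes essentially the same route as the paper: well-definedness is checked against the congruences of \cref{thm2} via \eqref{eq:defPhi1} and \eqref{eq:defPhi2}, injectivity follows from the triangular shape with nonzero diagonal entries $\Phi^{(m)}_{2k,k}=\prod_{p\in S_k}p^{\nu_p(C_p(k))}$, and surjectivity is proved by the same recursive construction of $(l_j)$, with integrality of $l_k$ coming from the \cref{thm2} congruences combined with the defining congruences of the matrix. Your observation that the divisibility step can be phrased as ``the difference of two elements of $\Momeul$ with vanishing initial terms again lies in $\Momeul$'' is a slightly slicker packaging of the paper's explicit chain of congruences, but it is the same argument.
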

\begin{proof}
	First we will show the well-definedness: Let $(l_i)_{i\geq 0}\in\ZZ^\NNo$ be given and set
	\[
		(b_k)_{k\geq 2m}:=\Phi_m((l_i)_{i\geq 0}).
	\]
	To see that $(b_k)_{k\ge 2m}\in\Momeul$, by \Cref{thm2} it suffices to show that for all $k\ge m$ and all primes $p$ the equation
	\[
		(1-p^{2k-1})b_{2k}\equiv \sum_{i=m}^{k-1} c^{(k-m,p)}_{2(i-m)}(1-p^{2i-1})b_{2i} \mod p^{v_p(C_p(k-m))}
	\]
	holds. By definition of $b_{2k}$ and \eqref{eq:defPhi2} we have for $k\ge m$ and $p\in S_{k-m}$
	\[
	\begin{split}
		(1-p^{2k-1})b_{2k}&=(1-p^{2k-1})\sum_{j=0}^{k-m} \Phi^{(m)}_{2(k-m),j} l_j \equiv\\
		& \equiv (1-p^{2k-1})\sum_{j=0}^{k-m-1} \Phi^{(m)}_{2(k-m),j} l_j \mod p^{v_p(C_p(k-m))}.
	\end{split}
	\]
	We further use \eqref{eq:defPhi1} to get the desired congruence for all $k\ge m$ and $p\in S_{k-m}$:
	\[
	\begin{split}
		(1-p^{2k-1})b_{2k} & \equiv (1-p^{2k-1})\sum_{j=0}^{k-m-1} \Phi^{(m)}_{2(k-m),j} l_j \equiv\\
		& \stackrel{(\ref{eq:defPhi1})}{\equiv} \sum_{j=0}^{k-m-1}\left( \sum_{i=m}^{k-1} c_{2(i-m)}^{(k-m,p)}(1-p^{2i-1})\Phi^{(m)}_{2(i-m),j} \right) l_j \equiv \\
		& \equiv \sum_{i=m}^{k-1} c_{2(i-m)}^{(k-m,p)}(1-p^{2i-1})b_{2i}  \mod p^{v_p(C_p(k-m))}.
	\end{split}
	\]
	Note that this congruence is vacuously satisfied for all primes $p\not\in S_{k-m}$ by \Cref{rem1}.\par
	It is clear that $\Phi_m$ is a homomorphism. The injectivity follows from the fact that $(\Phi^{(m)}_{2i,j})_{i,j\geq 0}$ is upper triangular with non-vanishing values $\Phi^{(m)}_{2k,k}=\prod_{p\in S_k} p^{v_p(C_p(k))}$ on the diagonal.\footnote{Strictly speaking, rather a line of slope two than the diagonal.}\par
	To see the surjectivity of $\Phi_m$, we will construct an inverse. Let $(b_k)_{k\geq 2m}\in\Momeul$ be given. Set $l_0:=b_{2m}$. Assume $n>0$ and that we already have defined $l_i$ for $i<n$ in such a way that
	\begin{equation}\label{eq:thm3_induction}
		b_{2(i+m)}=\sum_{j= 0}^{i} \Phi^{(m)}_{2i,j} l_j
	\end{equation}
	holds for all $ i<n$. For the following chain of congruences we use \Cref{thm2} applied to $(b_k)_{k\geq2m}$, the induction hypothesis (\ref{eq:thm3_induction}) and the defining equations of $\Phi^{(m)}_{i,j}$:
	\[
	\begin{split}
		(1-p^{2(n+m)-1})b_{2(n+m)} & \stackrel{\text{Thm.}\ref{thm2},\ref{it:explicit-finite}}{\equiv} \sum_{i=m}^{n+m-1} c_{2(i-m)}^{(n,p)}(1-p^{2i-1})b_{2i} \equiv \\
		& \stackrel{i-m\mapsto i}{\equiv} \sum_{i=0}^{n-1} c_{2i}^{(n,p)}(1-p^{2(i+m)-1})b_{2(i+m)} \equiv \\
		& \stackrel{(\ref{eq:thm3_induction})}{\equiv} \sum_{i=0}^{n-1} c_{2i}^{(n,p)}(1-p^{2(i+m)-1}) \left(\sum_{j=0}^{i}\Phi^{(m)}_{2i,j} l_j\right)  \equiv\\
		& \stackrel{\eqref{eq:defPhi2}}{\equiv} \sum_{j=0}^{n-1} \left(\sum_{i=0}^{n-1} c_{2i}^{(n,p)}(1-p^{2(i+m)-1}) \Phi^{(m)}_{2i,j}\right) l_j \equiv\\
		& \stackrel{\eqref{eq:defPhi1}}{\equiv} (1-p^{2(n+m)-1})\sum_{j=0}^{n-1} \Phi^{(m)}_{2n,j} l_j \mod p^{v_p(C_p(n))}.
	\end{split}
	\]
After canceling the $p$-adic unit $(1-p^{2(n+m)-1})$,
	this congruence shows that
	\[
		l_n:=\frac{b_{2(m+n)}-\sum_{j=0}^{n-1} \Phi^{(m)}_{2n,j} l_j }{\prod_{p\in S_n} p^{v_p(C_p(n))} }\in\ZZ
	\]
	is an integer. The desired equation
	\[
		b_{2(n+m)}=\sum_{j= 0}^{n} \Phi^{(m)}_{2n,j} l_j
	\]
	follows immediately from the definition of $l_n$. This construction gives a sequence $(l_i)_{i\geq0}$ which satisfies 
	$\Phi_m((l_i)_{i\geq0})=(b_k)_{k\geq 2m}$ by \eqref{eq:thm3_induction}.
\end{proof}

\begin{exmpl}\label{ex:moments}
	In this example we will calculate the first few rows of  the isomorphism
	\[ \Phi_1\colon\mathbb{Z}^{\mathbb{N}_0}\xrightarrow{\simeq}\mathrm{Mom}_{\ge 2}^{\mathrm{Euler}}\left( \simeq [bspin,gl_1(\KO)]\right), \]
	i.e. of the matrix
\[	
	(\Phi^{(1)}_{i,j})_{i,j\geq 0}=
	\left(\begin{matrix}
		\Phi^{(1)}_{0,0} & 0 & 0 & 0 & \cdots\\
		0 & 0 & 0 & 0 & \cdots \\
		\Phi^{(1)}_{2,0} & \Phi^{(1)}_{2,1} & 0 & 0 & \cdots\\
		0 & 0 & 0 & 0 & \cdots \\
		\Phi^{(1)}_{4,0} & \Phi^{(1)}_{4,1} & \Phi^{(1)}_{4,2} & 0 & \cdots\\
		0 & 0 & 0 & 0 & \cdots \\
		\vdots & \vdots & \vdots & \vdots & \ddots
	\end{matrix}\right).
\]
The calculation depends on the knowledge of sufficiently many of the polynomials $E_{2k}^{(p)}$ (cf. \Cref{def:e_and_E}). It turns out it suffices to know $E_{2k}^{(p)}$ for $p=2,3,5$ and $2k\leq 4$. Recall that $E_{2k}^{(p)}$ was defined by extension of $\varphi$-digits from the $e^{(p)}_{j\geq 0}$ as defined in \Cref{def:e_and_E}. For the $e^{(p)}_{j}$ we have:
	\begin{align*}
			e^{(2)}_0&=X,\quad e^{(2)}_1=\frac{X+1}{2},\quad e^{(2)}_2=\frac{X^2-1}{4!},\quad  e^{(2)}_3=\frac{(X^2-1)(X^2-9)}{8!},\\
			e^{(3)}_0&=X,\quad e^{(3)}_1=\frac{X^2-1}{3!},\\
			e^{(5)}_0&=X,\quad e^{(5)}_1=\frac{(X^2-1)(X^2-4)}{5!}.
	\end{align*}
	Now we can calculate $E_{2k}^{(p)}$ for $k=0,1,2$ and $p=2,3,5$:
	\begin{align*}
			E^{(2)}_0&=1,& E^{(2)}_2=&e^{(2)}_2,& E^{(2)}_4=&e^{(2)}_3, \\
			E^{(3)}_0&=1,& E^{(3)}_2=&e^{(3)}_1,& E^{(3)}_4=&(e^{(3)}_1)^2, \\
			E^{(5)}_0&=1,& E^{(5)}_2=&(e^{(5)}_0)^2,& E^{(5)}_4=&e^{(5)}_1.
	\end{align*}
	Expanding the above expressions for $E_{2k}^{(p)}$ will give the coefficients $c^{(k,p)}_{2i}$ and the inverse of the leading coefficient $C_p(k)$, which are needed for the following computations:\par
	The first two rows are given by
	\[
	\begin{split}
			\Phi^{(1)}_{0,*}:=(1,0,0,0,\cdots)\\
			\Phi^{(1)}_{1,*}:=(0,0,0,0,\cdots)
	\end{split}
	\]
	Next, let us compute $\Phi^{(1)}_{2k,*}$ for $k=1$. In this case the relevant set of primes $S_k:=\{ p\,\,\mathrm{ prime }\,\mid\, p-1\leq 2k\}$ is given by $S_1=\{2,3\}$. The polynomials
	\[
		X^2 E^{(2)}_2=\frac{X^4-X^2}{4!}=\frac{X^4-c^{(1,2)}_0 X^2}{C_2(1)}
	\]
	and
	\[
		X^2 E^{(3)}_2=\frac{X^4-X^2}{3!}=\frac{X^4-c^{(1,3)}_0 X^2}{C_3(1)}
	\]
	give:
	\begin{align*}
		C_2(1)=&4!,&C_3(1)=&3! \\
		c^{(1,2)}_0=&1,&c^{(1,3)}_0=&1.
	\end{align*}
	Hence the relevant modulus is (cf. \eqref{eq:defPhi1})
		\[ \prod_{p\in S_1}p^{v_p(C_p(1))}=8\cdot 3=24.\]
	Now $\Phi^{(1)}_{2,0}$ is defined as the unique integer with $0\leq \Phi^{(1)}_{2,0}< 2^3\cdot 3$ satisfying
	\[
		(1-p^{3})\Phi^{(1)}_{2,0} \equiv c^{(1,p)}_{0} (1-p)\Phi^{(1)}_{0,0} \mod p^{v_p(C_p(1))}
	\]
	for $p\in\{2,3\}$. Recalling that $\Phi_{0,0}^{(1)}=1$, this integer is easily computed to be $\Phi^{(1)}_{2,0}=7$. We obtain
\[
	\begin{split}
			\Phi^{(1)}_{2,*}=(7,24,0,0,\cdots)\\
			\Phi^{(1)}_{3,*}=(0,0,0,0,\cdots)
	\end{split}
	\]
	for the second and the third row.\par
	For $k=2$, we have
	\begin{equation*}
		S_2:=\{ p \text{ prime}\mid p-1\leq 4 \}=\{2,3,5\}
	\end{equation*}
	The coefficients of
	\begin{equation*}
	\begin{split}
		X^2 E^{(2)}_4=&\frac{X^6 - 10X^4 + 9X^2}{8!}=\frac{X^6 - c^{(2,2)}_2 X^4 -c^{(2,2)}_0 X^2}{C_2(2)}\\
		X^2 E^{(3)}_4=&\frac{X^6 - 2X^4 + X^2}{(3!)^2}=\frac{X^6 - c^{(2,3)}_2 X^4 -c^{(2,3)}_0 X^2}{C_3(2)}\\
		X^2 E^{(5)}_4=&\frac{X^6 - 5X^4 + 4X^2}{5!}=\frac{X^6 - c^{(2,5)}_2 X^4 -c^{(2,5)}_0 X^2}{C_5(2)}\\
	\end{split}
	\end{equation*}
	determine
	\begin{equation*}
			C_2(2)=8!,\quad C_3(2)=(3!)^2, \quad C_5(2)=5!
	\end{equation*}
	and:
	\begin{align*}
			c^{(2,2)}_0=&-9,&c^{(2,2)}_2=&10 \\
			c^{(2,3)}_0=&-1,&c^{(2,3)}_2=&2 \\
			c^{(2,5)}_0=&-4,&c^{(2,5)}_2=&5.
	\end{align*}	
	
	Hence the relevant modulus is
	
	\[ \prod_{p\in S_2} p^{v_p(C_p(2))}=2^7\cdot 3^2\cdot 5 = 5760.\]
	
	Solving the linear congruences for $\Phi^{(1)}_{4,0}$ and $\Phi^{(1)}_{4,1}$ gives
	\[
		\Phi^{(1)}_{4,0}=511 \mbox{ and } \Phi^{(1)}_{4,1}=4080.
	\]
	The fourth and fifth row are thus given by
	\[
	\begin{split}
			\Phi^{(1)}_{4,*}&=(511,4080,5760,0,\cdots)\\
			\Phi^{(1)}_{5,*}&=(0,0,0,0,\cdots).
	\end{split}
	\]
	All together we obtain
	\[
		(\Phi^{(1)}_{i,j})_{i,j\geq 0}=
		\left(\begin{matrix}
			1 & 0 & 0 & 0 & \cdots\\
			0 & 0 & 0 & 0 & \cdots \\
			7 & 24 & 0 & 0 & \cdots\\
			0 & 0 & 0 & 0 & \cdots \\
			511 & 480 & 5760 & 0 & \cdots\\
			0 & 0 & 0 & 0 & \cdots \\
			\vdots & \vdots & \vdots & \vdots & \ddots
		\end{matrix}\right)
	\]

	Combining this with \Cref{rem:relevance} yields the existence of a unique group isomorphism
	\[ \phi: \mathbb{Z}^{\mathbb{N}_0}\stackrel{\simeq}{\longrightarrow} [bspin, gl_1(KO)] \]
	such that for a given $(l_0,\ldots)\in\mathbb{Z}^{\mathbb{N}_0}$, defining
	$f:=\phi((l_0,\ldots))$, $(b_2,b_4,\ldots):=\Phi_1((l_0,\ldots))$ and (abusively) 
	$\beta$ the Bott element in both $\pi_*(bspin)$ and $\pi_*(gl_1(\KO))\otimes\mathbb{Q}\simeq \pi_*(\KO)\otimes\mathbb{Q}$, we have
	
	 \[ f(\beta^k)= b_k\cdot\beta^k\, , \, k\ge 2.\]
	 
	  It seems remarkable that {\em every} integer $l_0=b_2$ can be realized as the
	  multiplication on the bottom cell of some map $f: bspin\to gl_1(KO)$. 
	  This result also vastly generalizes the congruence $m\equiv 7n\,(12)$ from \cite[Rem. 6.2.3]{adams_loop}, which Adams used to show that there is
	  no {\em integral} equivalence $BO^\oplus\simeq BO^\otimes$ of $H$-spaces.
\end{exmpl}

\subsubsection[Sequences of total mass zero]{Sequences of total mass zero}

In the course of describing all $\mathbb{E}_\infty$-String orientations of $\tmf$, we will be concerned with the following group of integer sequences:
\begin{defin}\label{def0}
	Fix some integer $m\geq 1$.	The set of all integer sequences $(b_k)_{k\geq 2m}$ satisfying the following conditions will be denoted by $\Momzero$:\footnote{The notation means to suggest {\em moments} starting in weight $2m$ of measures with {\em total mass zero}.}
	\begin{enumerate}
		\item[\textbf{(A)}] We have $b_{2k+1}=0$ for $k\geq m$.
		\item[\textbf{(C)}] For each prime $p$, the sequence $(b_k)_{k\geq 2m}$ satisfies the equivalent conditions \Bp\ and \Bpa\ and in addition, the total mass of the corresponding measure is zero, to wit: for all primes $p$, there exists a measure $\mu_p\in\meas(\G,\Zp)$ such that
		\begin{align*}
			b_{2k}&=\int_{\G}{x^{2k}d\mu_p(x)}, k\geq m \text{ and }\\
					0 &=\int_{\G}{1 \, d\mu_p(x)}.
		\end{align*}
	\end{enumerate}
	Obviously $\Momzero$ is a group under addition of integer sequences.
\end{defin}

\begin{rem}
	Note that, unlike in \Cref{def1}, there is no "Euler-factor" $(1-p^{k-1})_{k\geq 2m}$ "removed" in condition \textbf{(C)}. The additional condition that the total mass of all appearing measures has to be zero has the following meaning for the sequence: Given $(b_k)_{k\geq 2m}\in \Momzero$ we have $b_{\varphi(p^r)}\rightarrow 0$ in $\Zp$ as $r\rightarrow \infty$ for every prime $p$.  
\end{rem}
Analogously to \Cref{thm2} in the case $\Momeul$, we get the following description of
$\Momzero\subseteq\mathbb{Z}^{\mathbb{N}_{\ge 2m}}$ by recursive congruences.
Recall that $\widehat{\ZZ}:=\lim_n \ZZ/n\ZZ$ denotes the pro-finite completion of the integers.

\begin{thm}\label{thm4}
For fixed $m\ge 1$ and an even sequence $(b_k)_{k\geq 2m}$ of integers, the following are equivalent:
\begin{enumerate}
	\item We have $(b_k)_{k\geq 2m} \in \Momzero$.
	\item For $0\leq i\leq 2m-1$ there exist (uniquely determined) $b_i\in \widehat{\ZZ}$ such that
	\[
	b_0=0, b_{2i+1}=0\text{ for }0\leq i\leq m-1
	\]
	and	such that for all $k\geq 1$ and all primes $p$ with $p-1\leq 2k$ we have the following congruence in $\widehat{\ZZ}$:
			\[
				b_{2k}\equiv \sum_{i=0}^{k-1} c^{(k,p)}_{2i}b_{2i} \mod p^{v_p(C_p(k))}\widehat{\ZZ}
			\]
	where $C_p(k)$ and	the $c^{(k,p)}_i$ are defined as in \Cref{cor2}.
\end{enumerate}
\end{thm}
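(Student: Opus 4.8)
\textbf{Proof proposal for Theorem \ref{thm4}.}

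The plan is to mimic the structure of the proof of \cref{thm2}, replacing the role of \cref{cor2} (the characterization of moments of \emph{arbitrary} $\Zp$-valued measures on $\G$ via recursive congruences) with a version for measures of total mass zero. The essential extra ingredient is that a sequence $(b_k)_{k\geq 2m}$ of total mass zero ``remembers'' what the missing low moments $b_0,\dots,b_{2m-2}$ would have to be: the total-mass-zero condition pins down $b_0=0$, and the recursive congruences themselves determine the remaining even $b_{2i}$ ($1\le i\le m-1$) as elements of $\hat\ZZ=\prod_p\Zp$, one $p$-adic coordinate at a time. So first I would reformulate condition \textbf{(C)} one prime at a time: for a fixed prime $p$, I claim that $(b_k)_{k\geq 2m}$ extends to a sequence of moments $(b_k)_{k\geq 0}$ (with $b_0 = 0$) of a measure $\mu_p\in\meas(\G,\Zp)$ \emph{if and only if} there exist $b_0,\dots,b_{2m-2}\in\Zp$ with $b_0=0$, $b_{2i+1}=0$, satisfying for all $k\ge 1$ with $p-1\le 2k$ the congruence $b_{2k}\equiv\sum_{i=0}^{k-1}c^{(k,p)}_{2i}b_{2i}\bmod p^{\nu_p(C_p(k))}$. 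This is precisely \cref{cor2} applied with $m=0$ to the extended sequence $(b_k)_{k\geq 0}$: the orthonormal basis $(E^{(p)}_{2k}(X))_{k\geq 0}$ of $\Cont(\G,\Qp)$ from \cref{cor1}(b) (the case $k=0$) shows that a measure with $\int_{\G}1\,d\mu_p = b_0 = 0$ and $\int_{\G}x^{2k}\,d\mu_p = b_k$ exists iff the congruences hold, and the total-mass condition is exactly the requirement $b_0=0$.

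Next I would assemble the primes. Given $(b_k)_{k\geq 2m}$ of integers, condition (b) produces, for each $p$, $p$-adic integers $b^{(p)}_0=0, b^{(p)}_2,\dots,b^{(p)}_{2m-2}\in\Zp$; collecting these across all primes gives the claimed $b_i\in\hat\ZZ$ for $0\le i\le 2m-2$. The uniqueness of these elements is immediate: reading the congruences for increasing $k$ with $p-1\le 2k$, each $b^{(p)}_{2i}$ is determined modulo higher and higher powers of $p$ as $k\to\infty$, because $\nu_p(C_p(k))\to\infty$ (every $\varphi$-digit expansion of $2k$ eventually involves $\varphi(p^r)$ for large $r$, and $p^{p^r}!$ contributes a large power of $p$). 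Hence the two directions of the equivalence: (a)$\Rightarrow$(b) by running \cref{cor2} (with $m$ replaced by $0$) prime-by-prime on the extended sequences and recording the forced low moments; (b)$\Rightarrow$(a) by observing that given the $b_i\in\hat\ZZ$, the $p$-th coordinates $b^{(p)}_i\in\Zp$ together with $(b_k)_{k\geq 2m}$ satisfy the hypotheses of \cref{cor2} with $m=0$, yielding a measure $\mu_p\in\meas(\G,\Zp)$ with the prescribed moments and $\int_{\G}1\,d\mu_p = b_0^{(p)} = 0$, which is exactly condition \textbf{(C)}.

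I expect the main obstacle to be the bookkeeping around \emph{which} congruences constrain \emph{which} low moments, and verifying that the system is consistent and determines the $b_i$ uniquely in $\hat\ZZ$ rather than overdetermining them. Concretely, one must check that for a fixed prime $p$ the congruences with $1\le k$ and $p-1\le 2k$ form a ``triangular'' system in the unknowns $b^{(p)}_2,\dots,b^{(p)}_{2m-2}$ together with the known $b^{(p)}_{2k}$ for $k\ge m$, so that the unknowns are solvable (indeed forced) and the remaining congruences ($k\ge m$) become genuine constraints on the original integer sequence; this is where \cref{rem1}'s observation that for $p\ne 2$ the congruence is vacuous when $2k<p-1$ is used to see that only finitely many primes impose conditions determining any given $b_{2i}$. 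Once this is in place, the argument is a routine reindexing of the proof of \cref{thm2}, and I would present it as such, citing \cref{cor2}, \cref{cor1}, and \cref{rem1}.
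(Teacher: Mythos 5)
Your overall strategy matches the paper exactly: reduce, prime by prime, to \cref{cor2} applied with $m=0$ to the ``extended'' sequence $(b_k)_{k\geq 0}$, and then reassemble the low coordinates into elements of $\hat{\ZZ}=\prod_p\Zp$. The forward and backward directions are handled just as you describe.

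The one place where your argument is not on solid ground is the uniqueness of the low $b_i$. You claim that ``each $b^{(p)}_{2i}$ is determined modulo higher and higher powers of $p$'' because $\nu_p(C_p(k))\to\infty$, and you describe the resulting congruences as a ``triangular system'' in the unknowns $b^{(p)}_0,\dots,b^{(p)}_{2m-2}$. This is not quite right: for $k\ge m$ every congruence involves the full linear combination $\sum_{i<m}c^{(k,p)}_{2i}b^{(p)}_{2i}$, so the system is not triangular and the growing moduli constrain linear forms in the unknowns, not the unknowns individually; it is not immediate from this picture that each $b^{(p)}_{2i}$ is pinned down. The paper sidesteps this entirely: given $(b_k)_{k\geq 2m}\in\Momzero$ there is, by definition, a measure $\mu_p$ with the prescribed tail and total mass zero, and one simply \emph{defines} $b_{2i,p}:=\int_{\G}x^{2i}\,d\mu_p$ for $i<m$. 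Uniqueness then comes for free from the fact that any tail of moments determines a $\Zp$-valued measure on $\G$ uniquely (a consequence of \cref{cor1}: the polynomials $X^{2m}E^{(p)}_{2n}$, $n\geq 0$, are an orthonormal basis of $\Cont(\G,\Qp)$, so a measure is determined by the linear functional it induces on their span). If you replace your ``triangular system'' heuristic by this measure-theoretic uniqueness argument, the proof is complete and coincides with the paper's.
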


\begin{proof}
	Assume $(b_k)_{k\geq 2m} \in \Momzero$ is given. By \Cref{def0}, for each prime $p$ there exists a measure $\mu_p\in\meas(\G,\ZZ_p)$  with
	\begin{align*}
			b_{2k}&=\int_{\G}{x^{2k}d\mu_p(x)}\text{ for all }k\geq m \text{ and }\\
			0&=\int_{\G}{1 \, d\mu_p(x)}.
	\end{align*}
	For $0\leq i\leq 2m-1$, we define elements $b_i\in\widehat{\ZZ} $ by specifying their $\Zp$-components 
	\[
		b_i=(b_{i,p})_p\in   \prod_{p \text{ prime}}\Zp \cong\widehat{\ZZ}
	\]
	for all primes $p$.	Namely, for an integer $0\leq i\leq m-1$ and a prime $p$ define:
	\begin{align*}
		b_{2i,p}:&= \int_{\G}{ x^{2i} d\mu_p(x) },\\
		b_{2i+1,p}:&=0.
	\end{align*}
	Note that $b_0=0$ by definition of $\Momzero$. The congruences at $p$ in condition $(b)$ between the $b_k$ are exactly the congruences satisfied by moments of measures on $\G$, see \Cref{cor2} (for $m=0$). The uniqueness  of the $b_i$ ($0\leq i \leq 2m-1$) follows since any tail of moments of a measure on $\G$ determines the measure uniquely.\par
	Conversely, assume the existence of $b_i\in \widehat{\ZZ}$ for $0\leq i\leq 2m-1$ as in $(b)$. For each prime $p$ and $i\ge 0$ define $b_{i,p}$ as the image of $b_i$ under the projection
	\[
		\begin{tikzcd}
			\widehat{\ZZ}\ar[two heads]{r} & \Zp.
		\end{tikzcd}
	\]
	Then, for each prime $p$ the congruences in $(b)$ ensure the assumptions of \Cref{cor2}, $(b)$ so \Cref{cor2} gives a measure $\mu_p\in\meas(\G,\ZZ_p)$ with
	\[
		b_{2k,p}=\int_{\G}x^{2k}d\mu_p(x)\,\, (k\ge 0) 
	\]
	and
	\[
		0=b_{0,p}=\int_{\G}1\,d\mu_p(x) , 
	\]
hence we have $(b_k)_{k\ge 2m}\in\Momzero$. (Note that for
$i\ge 2m$, the element $b_{i,p}\in\ZZ\subseteq\widehat{\ZZ}$
is independent of $p$).
\end{proof}

This result allows us to construct a bijection\footnote{The maps $\Psi_m^{(0)}$ will not be additive. Fortunately, this will not be required in any of our applications.} for $m\geq 1$
\[
		\begin{tikzcd}[row sep=tiny]
			\Psi_m^{(0)} \colon \widehat{\ZZ}^{m-1} \times \ZZ^{\NN_{\geq m}} \ar{r}{} & \Momzero
		\end{tikzcd}
\]
as follows: Let $(l_n)_{n\geq 1}\in \widehat{\ZZ}^{m-1} \times \ZZ^{\NN_{\geq m}}$, i.e. $l_i\in \widehat{\ZZ}$ for $1\leq i\leq m-1$ and $l_i\in\ZZ$ for $i\geq m$, be given. For each $k\geq 1$ define a set of primes $S_k$ by:
\[	
	S_k:=\{ p \text{ prime}\mid p-1\leq 2k \}
\]
and set:
\begin{align*}
	b_0:=0 \\
	b_1:=0
\end{align*}
Now define $b_i,i\geq 2$ recursively. Let $k\geq 1$ and assume we have already defined $b_i$ for all $i<2k$ let $0\leq b< \prod_{p\in S_k} p^{v_p(C_p(k))}$ be the unique integer satisfying
\[
	b \equiv \sum_{i=0}^{k-1} c^{(k,p)}_{2i}b_{2i} \mod p^{v_p(C_p(k))}\widehat{\ZZ}	\quad \forall p\in S_k,
\]
where existence and uniqueness follow from the Chinese remainder theorem. Define 
\[
	b_{2k}:=b+l_k  \prod_{p\in S_k} p^{v_p(C_p(k))}\in
	\begin{cases}
		\widehat{\ZZ} & k<m\\
		\ZZ & k\geq m
	\end{cases}
\]
and set $b_{2k+1}:=0$. This gives a sequence $(b_k)_{k\geq 0}\in \widehat{\ZZ}^{2m}\times\ZZ^{\NN_{\geq 2m}}$. Now ignoring the terms  of index $k<2m$ gives an integer sequence $(b_k)_{k\geq2m}$ which is contained in $\Momzero$ by \Cref{thm4} and by construction. Setting $\Psi^{(0)}_m((l_k)_{k\geq 1}):=(b_k)_{k\geq 2m}$ defines the map 
\[
		\begin{tikzcd}[row sep=tiny]
		\Psi^{(0)}_m \colon 	\widehat{\ZZ}^{m-1} \times \ZZ^{\NN_{\geq m}} \ar{r} & \Momzero.
		\end{tikzcd}
\]
We can now prove:
\begin{thm}\label{thm5}
	For a given integer $m\geq 1$, the above map  
		\[
			\begin{tikzcd}[row sep=tiny]
			\Psi_m^{(0)}	\colon \widehat{\ZZ}^{m-1} \times \ZZ^{\NN_{\geq m}} \ar[r] & \Momzero
			\end{tikzcd}
	\]
	is a bijection.
\end{thm}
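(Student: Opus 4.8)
The plan is to verify that the explicit recursive construction given just before the statement is well-defined, injective, and surjective onto $\Momzero$, using \cref{thm4} as the combinatorial characterisation of membership. The structure of the argument closely mirrors the proof of \cref{thm3}, with the difference that the first $m-1$ coordinates $l_1,\dots,l_{m-1}$ are allowed to be arbitrary elements of $\hat{\ZZ}$ rather than of $\ZZ$; this reflects the fact that condition \textbf{(C)} only constrains the tail $(b_k)_{k\ge 2m}$ and the ``initial data'' $b_0,\dots,b_{2m-1}$ of \cref{thm4}(b) live in $\hat{\ZZ}$.

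First I would check well-definedness. Given $(l_n)_{n\ge 1}\in\hat{\ZZ}^{m-1}\times\ZZ^{\NN_{\ge m}}$, the recursion produces at each stage $k\ge 1$ an integer $b\in\{0,\dots,\prod_{p\in S_k}p^{\nu_p(C_p(k))}-1\}$ solving the prescribed congruences simultaneously for all $p\in S_k$ — this is exactly the Chinese Remainder Theorem, since the $p^{\nu_p(C_p(k))}$ for distinct $p\in S_k$ are pairwise coprime — and then $b_{2k}:=b+l_{k-m}\prod_{p\in S_k}p^{\nu_p(C_p(k))}$. By construction $b_{2k}\equiv b\equiv\sum_{i=0}^{k-1}c^{(k,p)}_{2i}b_{2i}\bmod p^{\nu_p(C_p(k))}\hat{\ZZ}$ for every $p\in S_k$, and for $p\notin S_k$ (i.e.\ $p-1>2k$) there is nothing to check by \cref{rem1}. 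Hence the tail $(b_k)_{k\ge 2m}$, together with the chosen $b_0=b_1=\dots=b_{2m-1}$ with $b_0=0$ and odd-index entries zero, satisfies \cref{thm4}(b), so $(b_k)_{k\ge 2m}\in\Momzero$. One also notes $b_{2k}\in\ZZ$ precisely when $k\ge m$, so the tail is genuinely an integer sequence, while the terms with $k<m$ a priori lie in $\hat{\ZZ}$ and are discarded.

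Next, injectivity and surjectivity are established simultaneously by exhibiting the inverse, again by induction. Given $(b_k)_{k\ge 2m}\in\Momzero$, \cref{thm4} furnishes unique $b_0,\dots,b_{2m-1}\in\hat{\ZZ}$ (with $b_0=0$, odd entries zero) making the full sequence $(b_k)_{k\ge 0}$ satisfy the congruences of \cref{thm4}(b). Running the recursion in reverse: at stage $k\ge 1$, having recovered $b_0,\dots,b_{2k-2}$, let $b$ be the unique representative in $\{0,\dots,\prod_{p\in S_k}p^{\nu_p(C_p(k))}-1\}$ of $\sum_{i=0}^{k-1}c^{(k,p)}_{2i}b_{2i}$ modulo the $p^{\nu_p(C_p(k))}$; then the congruences \cref{thm4}(b) force $b_{2k}\equiv b$ modulo $\prod_{p\in S_k}p^{\nu_p(C_p(k))}\hat{\ZZ}$, so there is a unique $l_{k-m}\in\hat{\ZZ}$ with $b_{2k}=b+l_{k-m}\prod_{p\in S_k}p^{\nu_p(C_p(k))}$; and when $k\ge m$ one has $b_{2k}\in\ZZ$ and $b\in\ZZ$, whence $l_{k-m}\in\ZZ$, matching the target $\hat{\ZZ}^{m-1}\times\ZZ^{\NN_{\ge m}}$. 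This defines a two-sided inverse to $\Psi^{(0)}_m$, proving it is a bijection.

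The step I expect to require the most care is the bookkeeping around \emph{where} the various $b_i$ and $l_i$ live — tracking that $l_i\in\hat{\ZZ}$ for $1\le i\le m-1$ but $l_i\in\ZZ$ for $i\ge m$, and correspondingly that the recursion outputs elements of $\ZZ$ exactly from index $2m$ onward — together with confirming that the uniqueness clause in \cref{thm4}(b) really does pin down $b_0,\dots,b_{2m-1}$ so that the inverse is genuinely well-defined on all of $\Momzero$ (this rests on the fact, invoked in the proof of \cref{thm4}, that any tail of moments of a measure on $\G$ determines the measure, hence all lower moments, uniquely). No single estimate is hard; the subtlety is purely in the indexing and in keeping the $\hat{\ZZ}$-versus-$\ZZ$ distinction straight throughout both directions.
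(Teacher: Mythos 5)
Your proposal is correct and follows essentially the same route as the paper: well-definedness is checked against the congruence criterion of \cref{thm4} via the Chinese Remainder Theorem and \cref{rem1}, and bijectivity is established by explicitly constructing the inverse using the uniquely determined $b_0,\dots,b_{2m-1}\in\hat{\ZZ}$ from \cref{thm4}. One small remark: the index $l_{k-m}$ appearing in the recursion (copied from the paper) is an off-by-$m$ slip — the intended indexing is $l_k$, so that $l_k\in\hat{\ZZ}$ for $1\le k\le m-1$ and $l_k\in\ZZ$ for $k\ge m$, matching the target $\hat{\ZZ}^{m-1}\times\ZZ^{\NN_{\ge m}}$; this is cosmetic and affects neither your argument nor the paper's.
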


\begin{proof}
	It is clear from the construction of $\Psi^{(0)}_m$ and \Cref{thm4} that $\Psi^{(0)}_m$ is well defined. Bijectivity follows from the following construction of an inverse map: Given $(b_k)_{k\geq 2m}\in \Momzero$ let $b_i\in\widehat{\ZZ}$ for $ 0\leq i\leq 2m-1$ be the uniquely determined elements as in \Cref{thm4}. For $k\geq 1$ let $0\leq b< \prod_{p\in S_k} p^{v_p(C_p(k))}$ be the unique integer with
	\[
		b\equiv b_{2k} \mod \prod_{p\in S_k} p^{v_p(C_p(k))}\widehat{\ZZ}.
	\]
	Setting 
	\[
		l_k:=\frac{b_{2k}-b}{\prod_{p\in S_k} p^{v_p(C_p(k))}} \in
		\begin{cases}
			\widehat{\ZZ} & k<m\\
			\ZZ & k\geq m
		\end{cases}
	\]
	provides the inverse to $\Psi_m^{(0)}$
	\[
			\begin{tikzcd}[row sep=tiny, column sep=large]
				\widehat{\ZZ}^{m-1} \times \ZZ^{\NN_{\geq m}}  & \Momzero \ar[l,swap," \left( \Psi_m^{(0)} \right)^{-1}"]\\
				(l_k)_{k\geq 0}  & \ar[l,mapsto] (b_k)_{k\geq 2m}.  \\
			\end{tikzcd}
	\]
\end{proof}

\section{Topological applications}\label{sec:top_app}

\subsection[Orientations of KO]{$\mathbb{E}_\infty$-String and $\mathbb{E}_\infty$-Spin orientations of KO}\label{sec:string_of_ko}

In \cite{andohopkinsrezk} M.~Ando, M.J.~Hopkins and C.~Rezk determine the set of all homotopy classes of $\mathbb{E}_\infty$-ring maps from $\MString$ (resp. $\MSpin$) to the real $K$-Theory spectrum $\KO$ in terms of even sequences of rational numbers satisfying a specific interpolation property at every prime $p$. After recalling their result we will give an explicit bijection between $\ZZ^\NNo$ and the set of these $\mathbb{E}_\infty$-orientations of $\KO$. The set $\mathrm{C}_{\mathrm{string}}(gl_1(\KO))$ is defined as the image of the characteristic map $b$, see \cite[Def. 5.11.]{andohopkinsrezk},
\[
		\begin{tikzcd}[row sep=tiny]
			b\colon \pi_0 \mathbb{E}_\infty (\MString,\KO) \ar{r} & \prod_{k\geq 4} \pi_{2k}(gl_1(\KO)) \otimes \QQ,
		\end{tikzcd}	
\]
and we define $\mathrm{C}_{\mathrm{spin}}(gl_1(\KO))$ in the same way, replacing $\MString$ by $\MSpin$ and replacing the condition
``$k\ge 4$'' by the condition ``$k\ge 2$''.
Recall that the Bernoulli numbers $B_k\in\QQ$ are defined by a generating series
$\frac{t}{e^t-1}=:\sum\limits_{k\ge 0}B_k\cdot\frac{t^k}{k !}$, giving for example 
$B_2=\frac{1}{6}$, $B_4=\frac{-1}{30}$ and $B_{2k+1}=0$ for all $k\ge 1$, cf. 

The main result of \cite{andohopkinsrezk} about $\mathbb{E}_\infty$-orientations of $\KO$ is as follows:
\begin{thm}[{\cite[Theorem 6.1]{andohopkinsrezk}}]\label{thm_ahs}
	The characteristic maps induce bijections
	\[
		\pi_0 \mathbb{E}_\infty (\MString,\KO)\xrightarrow{\simeq}\mathrm{C}_{\mathrm{string}}(gl_1(\KO))
	\]
	resp.
	\[
		\pi_0 \mathbb{E}_\infty (\MSpin,\KO)\xrightarrow{\simeq} \mathrm{C}_{\mathrm{spin}}(gl_1(\KO))
	\]
	and identify $\mathrm{C}_{\mathrm{string}}(gl_1(\KO))$ (resp. $\mathrm{C}_{\mathrm{spin}}(gl_1(\KO))$)
	with the set of sequences $(b_k)_{k\geq 4}$ (resp. $(b_k)_{k\geq 2}$), $b_k\in\QQ$ satisfying the following conditions:
	\begin{enumerate}
	\item We have $b_{2k+1}=0$ for $k\geq 2$ (resp. $k\geq 1$).
	\item We have $b_k\equiv -\frac{B_k}{2k} \mod \ZZ$ for all $k\ge 4$ (resp. $k\ge 2$).
	\item For every prime $p$ and $c\in \G$ there exists a $p$-adic measure $\mu_c\in\meas(\G,\Zp)$ such that
	\[
		(1-p^{2k-1})(1-c^{2k})b_{2k}=\int_{\G}{x^{2k}d\mu_c(x)} \text{ for all }\, k\geq 2 \,\text{(resp. $k\geq 1$)} .
	\]
	\end{enumerate}
\end{thm}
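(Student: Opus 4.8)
The statement is \cite[Theorem 6.1]{andohopkinsrezk}; here is the strategy one would follow. The plan is to pass from $E_\infty$ orientations to maps of spectra, to study those through the arithmetic fracture square, and to recognise the resulting local data as $p$-adic measures on $\G$.

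First one uses that $\MString$ is the Thom spectrum of an $E_\infty$-map $B\mathrm{String}\to B\mathrm{GL}_1 S$, so that by the obstruction theory of \cite[\S 5]{andohopkinsrezk} --- in particular the quoted Proposition~5.19, see Remark~\ref{rem:relevance} --- the set $\pi_0 E_\infty(\MString,\KO)$, once shown to be non-empty, is a torsor under $[bstring,gl_1\KO]$, where $bstring$ is the relevant connective spectrum, with $\pi_*(bstring)\otimes\QQ$ one-dimensional in degrees $8,12,16,\dots$ and zero otherwise. Non-emptiness is witnessed by the $\sigma$-orientation (Witten genus) composed with $\MSpin\to\KO$. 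The characteristic map $b$ is this torsor identification followed by rationalisation, and since $[bstring,gl_1\KO]$ turns out to be torsion-free, $b$ is injective; hence $\pi_0 E_\infty(\MString,\KO)\xrightarrow{\sim}\mathrm{C}_{\mathrm{string}}(gl_1\KO)=\im(b)$, and the real content is the description of $\im(b)$ by (a)--(c). The $\MSpin$ case is identical with $bstring$ replaced by $bspin$ (rational homotopy in degrees $4,8,12,\dots$), which is why the index range widens from $k\ge4$ to $k\ge2$.

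To compute $\im(b)$ one resolves $gl_1\KO$ through its rationalisation and its $p$-completions. Rationally, the classical logarithm identifies $gl_1\KO\otimes\QQ$ with $(\tau_{\ge1}\KO)\otimes\QQ$, whence $[bstring,gl_1\KO]\otimes\QQ\cong\prod_{k\ge4}\Hom(\pi_{2k}(bstring)\otimes\QQ,\pi_{2k}(\KO)\otimes\QQ)$; this is the ambient group of the rational sequence $(b_k)$, and it forces (a) because $\pi_{4k+2}(\KO)\otimes\QQ=0$. $p$-adically, one analyses $\mathrm{Map}(bstring,gl_1\KO^{\wedge}_p)$ via Rezk's $K(1)$-local logarithm --- which on $\pi_{2m}(\KO^{\wedge}_p)$, $m>0$, acts by multiplication by $1-p^{m-1}$, i.e.\ by the Euler factor $1-p^{2k-1}$ on the degree-$4k$ part --- together with the structure of the $\KO^{\wedge}_p$-cohomology of $B\mathrm{String}$ as a module over the Iwasawa algebra $\Zp\llbracket\G\rrbracket$ acting through the Adams operations $\psi^c$ (the operation $\psi^{-1}$ being complex conjugation, which is why $\G=\Zp^*/\{\pm1\}$ enters rather than $\Zp^*$). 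The upshot is that the relevant homotopy group is a module of $\Zp$-valued measures on $\G$, and the comparison with the rational side sends such a measure to its sequence of $2k$-th moments \emph{after} multiplication by $1-p^{2k-1}$. Passing from the space $B\mathrm{String}$ to the spectrum $bstring$ forces one into the augmentation ideal, i.e.\ to divide by $\id-\psi^c$ for a topological generator $c$ of $\G$, which multiplies the $2k$-th moment by $1-c^{2k}$; this produces the factor $(1-c^{2k})$ in (c), and Proposition~\ref{prop_regularize} shows the resulting condition does not depend on the choice of $c$. So the local condition at $p$ is exactly: the sequence $\bigl((1-p^{2k-1})(1-c^{2k})b_{2k}\bigr)_k$ consists of moments of a measure in $\meas(\G,\Zp)$.

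It remains to glue via the arithmetic square: $(b_k)\in\im(b)$ iff for every prime $p$ its image lifts along $gl_1\KO^{\wedge}_p\to(gl_1\KO^{\wedge}_p)\otimes\QQ$ --- that is, (c) holds --- and the family of $p$-adic lifts is compatible with the rational lift. Compatibility is precisely condition (b): the orientation $\MSpin\to\KO$ (the $\hat{A}$-genus, which is the base point used above) has characteristic sequence congruent to $\bigl(-\tfrac{B_k}{2k}\bigr)$ modulo integers, and by von Staudt--Clausen together with the Kummer congruences this sequence already attains, at every prime, the extremal denominators permitted by (c); therefore a rational sequence glues exactly when it differs from $\bigl(-\tfrac{B_k}{2k}\bigr)$ by an integral sequence, which is (b). Subtracting the base point, surjectivity comes down to the assertion that the integral sequences satisfying (c) are precisely the image of $[bstring,gl_1\KO]$ --- namely the group $\Momeulzwei$ (resp.\ $\Momeuleins$) of Remark~\ref{rem:relevance}. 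The main obstacle is the $p$-adic step: it rests on the logarithmic (Bousfield--Kuhn) description of the units of $K(1)$-local $E_\infty$-rings, on the computation of the $\KO^{\wedge}_p$-cohomology of $B\mathrm{String}$ in terms of the Iwasawa algebra, and on a careful bookkeeping of the Adams operations producing the Euler factor; this is the technical heart of \cite{andohopkinsrezk}.
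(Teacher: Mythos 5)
The paper does not prove this theorem: it is quoted verbatim from \cite{andohopkinsrezk} (Theorem 6.1), and the only commentary in the paper is a remark that the proof given there for the string case works unchanged for the spin case. There is therefore no ``paper's own proof'' to compare against; what you have written is a sketch of the argument in \cite{andohopkinsrezk}, which the paper takes as a black box.

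As a reconstruction of the AHR argument your sketch is essentially correct: the torsor structure over $[bstring,gl_1\KO]$ coming from the Thom-spectrum obstruction theory, the arithmetic fracture square, the rational logarithm, Rezk's $K(1)$-local logarithm producing the Euler factor $1-p^{2k-1}$, the Iwasawa-algebra module structure via Adams operations with $\psi^{-1}$ accounting for the quotient $\Zp^*/\{\pm1\}$, the factor $(1-c^{2k})$ from passing to the spectrum-level (augmentation-ideal) statement, and von Staudt--Clausen governing condition (b) are all the right ingredients, assembled in the right order. One slip worth flagging: you cite ``the $\sigma$-orientation (Witten genus) composed with $\MSpin\to\KO$'' as the witness for non-emptiness, but the Witten genus is the string orientation of $\tmf$ and does not factor through $\MSpin$. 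The base point used (and which you later correctly name) is the Atiyah--Bott--Shapiro $\hat{A}$-orientation $\MSpin\to\KO$, pulled back along $\MString\to\MSpin$ for the string case. Also note that the dependence on the choice of $c$ in condition (c) is resolved in \cite{andohopkinsrezk} by arguments internal to that paper; the present paper's Proposition~\ref{prop_regularize} is a \emph{re-proof} of that independence, not something AHR could appeal to. With those caveats, your account is a fair précis of the cited proof, but it is not something the present paper attempts or needs to reprove.
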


\begin{rem}
		Strictly speaking, the proof in \cite{andohopkinsrezk} is given only for the case of $\mathbb{E}_\infty$-String orientations, but the same proof also works in the case of $\mathbb{E}_\infty$-Spin orientations. 
\end{rem}

Our preparatory work on integer sequences of moments now immediately gives the following result.

\begin{thm}\label{thm:many_orient}
	There are bijections
		\[
		\begin{tikzcd}
			 \ZZ^\NNo\ar{r}{\cong} & \pi_0 \mathbb{E}_\infty (\MSpin,\KO)
		\end{tikzcd}
		\]
		and
		\[
		\begin{tikzcd}
			 \ZZ^\NNo\ar{r}{\cong} & \pi_0 \mathbb{E}_\infty (\MString,\KO).
		\end{tikzcd}
		\]
\end{thm}
\begin{proof}
	By \Cref{thm_ahs}, the set $\mathrm{C}_{\mathrm{string}}(gl_1(\KO))$ (resp. $\mathrm{C}_{\mathrm{spin}}(gl_1(\KO))$) is a torsor under the group $\Momeulzwei$ (resp. $\Momeuleins$) with an explicit trivialization given by:
	\begin{equation}\label{eq:c_is_mom}
		\begin{tikzcd}[row sep=tiny]
			\Momeulzwei \ar{r}{\simeq} & \mathrm{C}_{\mathrm{string}}(gl_1(\KO)), \\
			(\tilde{b}_k)_{k\geq 4} \ar[mapsto]{r} & (b_k:=\tilde{b}_k-\frac{B_k}{2k})_{k\geq 4}
		\end{tikzcd}
	\end{equation}
	and similarly for $\Momeuleins$ and $\mathrm{C}_{\mathrm{spin}}(gl_1(\KO))$. Now the claim follows from the isomorphisms
	\[
		\Phi_2 \colon \ZZ^\NNo \stackrel{\simeq}{\rightarrow} \Momeulzwei
	\]
	resp.
	\[
		\Phi_1 \colon \ZZ^\NNo \stackrel{\simeq}{\rightarrow} \Momeuleins
	\]
	in \Cref{thm3}.
\end{proof}

\subsection[Orientations of tmf]{$\mathbb{E}_\infty$-String orientations of tmf}\label{sec:string_of_tmf}

Recall the spectrum tmf of topological modular forms from \cite{tmf-book}.
We first recall the main result of \cite{andohopkinsrezk} about string orientations of tmf: For any subring $R\subseteq \CC$ let $\MF(R)=\bigoplus_{k\geq 0} \MF_k(R)$ denote the ring of modular forms with coefficients in $R$, see
\cite{deligne-rapoport}. For a prime $p$, let $\MFp$ be the ring of $p$-adic modular forms in the sense of Serre \cite{serre2}. The set $\mathrm{C}(gl_1(\tmf))$ is defined as the image of the characteristic map $b$, see \cite[Def. 5.11.]{andohopkinsrezk}:
\[
		\begin{tikzcd}[row sep=tiny]
			b\colon \pi_0 \mathbb{E}_\infty (\MString,\tmf) \ar{r} & \prod_{k\geq 4} \pi_{2k}(gl_1(\tmf)) \otimes \QQ.
		\end{tikzcd}	
\]

By \cite[Proposition 5.19]{andohopkinsrezk}, the characteristic map 
\[ b\colon \pi_0 \mathbb{E}_\infty(\MString,\tmf)\longrightarrow \mathrm{C}(gl_1(\tmf))\] is surjective with fibers principal
homogeneous under the group $A:=[bstring,gl_1(\tmf)]_{tors}$
of torsion classes in the group of maps of spectra $[bstring,gl_1(\tmf)]$. 
This group is a countably infinite
dimensional $\mathbb{F}_2$-vector space \cite[Remark 6.22]{hopkins_ICM}. The main theorem of \cite{andohopkinsrezk} describes  $\mathrm{C}(gl_1(\tmf))$ in terms of $p$-adic modular forms, and thus gives an arithmetic description of all $\mathbb{E}_\infty$-maps $\MString\to \tmf$ up to the ``torsion-ambiguity'' caused by the action of $A$.

\begin{thm}[{\cite[Thm. 12.1.]{andohopkinsrezk}}]\label{thm_ahr_tmf}
	The set  $\mathrm{C}(gl_1(\tmf))$ is the set of sequences $(g_k)_{k\geq 4}$ with $g_k\in \MF_k(\QQ),k\geq 4$ satisfying the following conditions:
	\begin{enumerate}
		\item We have $g_{2k+1}=0$ for $k\geq 2$.\footnote{This condition is vacuous because $M_{2k+1}(\QQ)=\{ 0\}$, but we include it to ease the comparison with \Cref{thm_ahs}.}
		\item For the $q$-expansion of $g_k$, we have $g_k(q)\equiv -\frac{B_{k}}{2k} \mod \ZZ\llbracket q\rrbracket$ for all $k\ge 4$.
		\item For each prime $p$ and each $c\in \G$ there exists a measure $\mu_c\in \meas(\G,\MF_p)$ such that for all $k\geq 4$:
		\[
				\left( \int_{\G}{x^{2k} d\mu_c(x)}\right)(q)=(1-c^{2k})(g_{2k}(q)-p^{2k-1}g_{2k}(q^p)).
		\]

		\item For all primes $p$ and all $k\ge 4$, we have $g_k|T(p)=(1+p^{k-1})g_k$, where $T(p)$ denotes the $p$-th Hecke operator.
	\end{enumerate}
\end{thm}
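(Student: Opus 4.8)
The statement is a recollection of \cite{andohopkinsrezk}[Theorem 12.1], so a ``proof'' here amounts to recalling the relevant constructions of \cite{andohopkinsrezk} and checking that the interpolation condition formulated there unwinds to the list (a)--(d). I would proceed as follows. First recall from \cite{andohopkinsrezk}[\S 5] the characteristic map $b$: since $\pi_{2k}(gl_1\tmf)\otimes\QQ\cong\pi_{2k}(\tmf)\otimes\QQ\cong\MF_k(\QQ)$ for $k\ge 1$ (integrally one must be careful in low degree and at the primes $2,3$, but this plays no role rationally), $b$ takes values in $\prod_{k\geq 4}\MF_k(\QQ)$ and assigns to an $E_\infty$-orientation $\MString\to\tmf$ the sequence of modular forms $g_k$ underlying it. Condition (a) is then automatic, there being no nonzero odd-weight level-one modular forms. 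Condition (b) records the value of $b$ on the canonical ($\sigma$-, i.e.\ Witten) orientation, which is the normalized Eisenstein series $G_k=-\tfrac{B_k}{2k}+\sum_{n\geq 1}\sigma_{k-1}(n)q^n$; since by \cite{andohopkinsrezk}[Proposition 5.19] the characteristic classes form a torsor under an explicit group of (integrally, resp.\ $\MF$-valued, constrained) sequences --- the $\tmf$-analogue of the groups $\Momeulzwei$, $\Momzero$ considered above --- the image of $b$ is the coset of $(G_k)_k$ modulo that group, which is exactly (b).

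Next I would recall the two arithmetic inputs of \cite{andohopkinsrezk}. The \emph{rational} computation identifies which $q$-expansions occur after rationalization and, together with Hecke-equivariance of the $\sigma$-orientation, forces condition (d): $g_k\mid T(p)=(1+p^{k-1})g_k$ for every prime $p$, i.e.\ each $g_k$ is an Eisenstein-type Hecke eigenform of the predicted eigenvalue. The \emph{integral} input is the descent/obstruction-theoretic analysis. Since $\tmf$ is the global sections of a sheaf of $E_\infty$-rings on the moduli stack of generalized elliptic curves, \cite{andohopkinsrezk} show that the space of $E_\infty$-string orientations of $\tmf$ is computed by a cosimplicial obstruction theory whose relevant homotopy groups are, prime by prime, governed by $\MFp$-valued measures on $\G$. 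The existence for every $c$ of a measure $\mu_c\in\meas(\G,\MFp)$ with the stated moments is precisely condition (c); the combination $g_{2k}(q)-p^{2k-1}g_{2k}(q^p)$ occurring there is the $p$-stabilization (``Euler factor removed'') dictated by the $U_p/V_p$-relation on $q$-expansions, exactly paralleling the factor $(1-p^{2k-1})$ in the $\KO$-statement \cref{thm_ahs}.

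The one genuinely hard ingredient --- and the one I would cite rather than reprove --- is this obstruction-theoretic analysis of \cite{andohopkinsrezk}: that $\pi_0 E_\infty(\MString,\tmf)$ is correctly computed by the relevant cosimplicial object and that its obstruction groups are the claimed modules of $\MFp$-valued measures. Granting this, what is left is the formal translation between \cite{andohopkinsrezk}'s phrasing of the interpolation condition and the conditions (a)--(d): unwinding the definitions of the Hecke operators $T(p)$, the $q$-expansion of $G_k$, and the moment functional of a measure. I expect no surprises there; the bookkeeping mirrors the $\KO$-case of \cref{thm_ahs}, with coefficients enlarged from $\Zp$ to $\MFp$ and the extra Hecke-eigenform constraint that has no $\KO$-analogue.
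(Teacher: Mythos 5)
The paper offers no proof of this statement at all: it is imported verbatim as Theorem~12.1 of \cite{andohopkinsrezk} and used as a black box, so there is nothing in the paper for your sketch to be compared against. You correctly recognize this and take the right stance by citing the hard input rather than reproving it; conditions (a), (b) are exactly as you describe, (a) being vacuous for level-one forms of odd weight and (b) recording that the Witten ($\sigma$-)orientation provides a basepoint.

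One inaccuracy worth flagging in your paragraph on condition (c): the mechanism in \cite{andohopkinsrezk} producing the measures $\mu_c\in\meas(\G,\MFp)$ is \emph{not} a cosimplicial obstruction theory coming from descent of $\tmf$ along the moduli stack of elliptic curves. Rather, Ando--Hopkins--Rezk analyze $gl_1(\tmf)$ via the arithmetic fracture square, pass to the $K(1)$-local picture prime by prime, and use Rezk's logarithmic cohomology operation together with the identification of the $K(1)$-local units of $\tmf$ in terms of $\MFp$-valued measures on $\Zp^*/\{\pm1\}$; the ``$p$-stabilized'' combination $g_{2k}(q)-p^{2k-1}g_{2k}(q^p)$ and the multiplier $(1-c^{2k})$ emerge from that logarithm and the regularization by $c$, not from a descent spectral sequence. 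Since you are citing the result rather than reproving it, this misdescription does not break your argument, but it should be corrected if you want the sketch to accurately reflect the source. The Hecke-eigenform condition (d) likewise comes out of compatibility of the moment sequence with Frobenius/$V_p$ on $p$-adic modular forms in the $K(1)$-local analysis, not from a separate ``rational computation''; see also the remark immediately after the theorem in the paper, which explains that (d) forces $g_k$ to be a rational multiple of $G_k$.
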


\begin{exmpl}\label{ex:eisenstein_works}
There is one sequence of modular forms well-known to satisfy the above conditions
\cite[Prop. 10.10.]{andohopkinsrezk}, namely the sequence of Eisenstein series
\[
	G_k(q)=-\frac{B_k}{2k}+\sum_{n\geq1}\sigma_{k-1}(n)q^n\,(k\ge 4\mbox{ even}),
\]
and $G_{2k+1}:=0$ for all $k\ge 2$. Here, $\sigma_{k-1}(n):=\sum\limits_{1\leq d\mid n}
d^{k-1}$ denotes the sum of the $(k-1)$st powers of all positive divisors of $n$.
\end{exmpl}

\begin{rem}\label{rem:multone}
	Condition $(d)$ in \Cref{thm_ahr_tmf} implies that each $g_k$ is a rational multiple of the Eisenstein series $G_k$, compare \cite{serre_arithmetics}[VII, 5.5, Proposition 13]. To see this, note that the Eisenstein series $G_k$ satisfies condition $(d)$, and that the common eigenspace of all $T(p)$ on $\MF_k(\QQ)$ is one dimensional by "multiplicity one", compare \cite[VII, 5.3, Corollary 1]{serre_arithmetics}.
\end{rem}

To understand the conditions of \Cref{thm_ahr_tmf}, we will need some preparation.

\begin{thm}[von Staudt-Clausen]\label{thm_vonstaudtclausen}
	Let $n\geq 2$ be even. Then
	\[
		B_{n}+\sum_{(p-1)|n}\frac{1}{p} \in \ZZ,
	\]
	where the sum extends over all primes $p>0$ such that $p-1$ divides $n$.
\end{thm}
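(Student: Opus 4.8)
The plan is to reduce the global integrality assertion to a local (\(p\)-adic) statement at each prime, and then to establish the local statement by strong induction using Faulhaber's power-sum formula.

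\emph{Step 1 (reduction to a local statement).} It suffices to prove that for every prime \(p\) and every even \(n\ge 2\): if \((p-1)\nmid n\) then \(B_n\in\Zp\), while if \((p-1)\mid n\) then \(pB_n\in\Zp\) and \(pB_n\equiv -1 \bmod p\Zp\). Granting this, observe that the sum \(\sum_{(p-1)\mid n}\tfrac1p\) is finite, since any prime occurring in it satisfies \(p-1\le n\); hence \(x:=B_n+\sum_{(p-1)\mid n}\tfrac1p\) is a well-defined rational number. Fix a prime \(q\). If \((q-1)\nmid n\), then \(q\) does not occur in the sum, so all the displayed fractions have denominator prime to \(q\), and \(B_n\in\ZZ_q\) by the local statement; hence \(x\in\ZZ_q\). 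If \((q-1)\mid n\), then \(qB_n\equiv -1 \bmod q\ZZ_q\) gives \(B_n+\tfrac1q\in\ZZ_q\), and the remaining terms of the sum again have denominators prime to \(q\); hence \(x\in\ZZ_q\). Thus \(x\in\QQ\) lies in \(\ZZ_q\) for every prime \(q\), so \(x\in\ZZ\), which is the claim.

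\emph{Step 2 (the local statement).} Fix \(p\) and argue by strong induction on even \(n\ge 2\), carrying along the hypothesis \(\nu_p(B_k)\ge -1\) for all even \(k<n\). Faulhaber's formula, together with the identity \(\tfrac{1}{n+1}\binom{n+1}{k}=\tfrac{1}{n+1-k}\binom{n}{k}\) and \(B_1=-\tfrac12\), yields for \(n\ge 1\)
\[
\sum_{a=1}^{p-1}a^{n}=\sum_{k=0}^{n}\frac{1}{n+1-k}\binom{n}{k}B_k\,p^{\,n+1-k}
= pB_n+\sum_{k=0}^{n-1}\frac{1}{n+1-k}\binom{n}{k}B_k\,p^{\,n+1-k}.
\]
Each summand of the tail lies in \(p\Zp\): for odd \(k\in[3,n-1]\) the term vanishes because \(B_k=0\); the term \(k=1\) equals \(-p^{n}/2\in p\Zp\) and the term \(k=0\) equals \(p^{n+1}/(n+1)\in p\Zp\); and for even \(k\in[2,n-2]\), writing \(j:=n+1-k\ge 3\), one has \(\nu_p\big(\tfrac{1}{j}\binom{n}{k}B_k p^{\,j}\big)\ge j-\nu_p(j)+\nu_p(B_k)\ge j-\nu_p(j)-1\ge 1\), using the inductive bound and the elementary inequality \(\nu_p(j)\le j-2\) valid for \(j\ge 3\). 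Hence \(pB_n\in\Zp\) and \(pB_n\equiv \sum_{a=1}^{p-1}a^{n}\bmod p\Zp\). It remains to evaluate the power sum modulo \(p\): for \(p=2\) it equals \(1\equiv -1\); for odd \(p\) one sums over the cyclic group \((\ZZ/p)^{\times}\), getting \(p-1\equiv -1\) when \((p-1)\mid n\) (Fermat), and \(0\) otherwise (geometric series with a primitive root, since \((g^{n})^{p-1}\equiv 1\) while \(g^{n}\not\equiv 1\)). This is exactly the local statement, and the induction closes (the base case \(n=2\) is the same computation with an empty inner sum over even \(k\)).

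\emph{Expected main obstacle.} There is no deep difficulty here; the one point requiring care is the \(p\)-adic bookkeeping in the tail of Faulhaber's formula, namely ensuring that the potentially ``small'' contributions do not spoil the congruence \(pB_n\equiv\sum a^{n}\bmod p\Zp\). This is dispatched by the inequality \(\nu_p(j)\le j-2\) for \(j\ge 3\), together with the remark that the index \(j=2\) occurs only in the summand attached to \(B_{n-1}\), which vanishes for \(n\ge 4\) and equals \(-\tfrac12\) for \(n=2\); the remaining manipulations are formal.
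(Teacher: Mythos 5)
Your proof is correct. The paper itself offers no argument for von Staudt--Clausen; it simply cites Washington's \emph{Introduction to Cyclotomic Fields}, Theorem 5.10, so there is no in-paper proof to compare against. Your route — reduce to a $p$-local statement ($B_n\in\Zp$ when $(p-1)\nmid n$, and $pB_n\equiv -1\bmod p$ when $(p-1)\mid n$), then establish it by strong induction via Faulhaber's formula for $\sum_{a=1}^{p-1}a^n$ at $m=p$ — is one of the standard textbook proofs and is a reasonable self-contained replacement for the citation. The bookkeeping checks out: the identity $\tfrac{1}{n+1}\binom{n+1}{k}=\tfrac{1}{n+1-k}\binom{n}{k}$ is correct; the special terms $k=0,1$ are handled separately; the odd-index $B_k$ vanish; and for even $k\in[2,n-2]$ the bound $\nu_p(j)\le j-2$ for $j\ge 3$ (which holds since $j\ge p^{\nu_p(j)}\ge 2^{\nu_p(j)}$ and one checks the small cases) together with the inductive hypothesis $\nu_p(B_k)\ge -1$ forces each tail term into $p\Zp$. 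The evaluation of $\sum_{a=1}^{p-1}a^n\bmod p$ via a primitive root is standard, and the globalization via $\QQ\cap\bigcap_q\ZZ_q=\ZZ$ is sound. One stylistic remark: your closing comment about the ``index $j=2$'' is redundant, since that corresponds to $k=n-1$, which is either the $k=1$ case you already treated separately (when $n=2$) or an odd index $\ge 3$ with $B_{n-1}=0$ (when $n\ge 4$); you may safely drop that sentence.
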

\begin{proof}
	\cite[Theorem 5.10]{washington}.
\end{proof}

Recall that $v_p\colon\QQ^*\to\ZZ$ denotes the $p$-adic valuation, normalized by $v_p(p)=1$.

\begin{cor}\label{cor:valuation}
\begin{enumerate}
\item\label{estimation} For all primes $p$ and integers $k\ge 2$, we have
	\[ 
		v_p(B_k)\left\{\begin{array}{ccc} \ge 0 & ; & (p-1)\not |\,\, k\\
													=-1 & ; & (p-1)\mid k.\end{array}\right.
	\]												
\item\label{exact_value} For all primes $p$ and integers $r\ge 2$ we have
\[ v_p\left( \frac{B_{\varphi(p^r)}}{\varphi(p^r)}\right) = -r.\]
\item\label{integrality} For all primes $p$, integers $k\ge 1$ and $c\in\ZZ_p^*$, we have
	\[
			-(1-c^{2k})\frac{B_{2k}}{4k}\in\Zp .
 	\]
\end{enumerate}
\end{cor}

\begin{proof}
Part $(a)$ is immediate from \Cref{thm_vonstaudtclausen}. To see part $(b)$, we have
from \Cref{thm_vonstaudtclausen} that $B_{\varphi(p^r)}+a+1/p\in\mathbb{Z}_{(p)}$ for some 
$p$-adic unit $a$, hence $v_p(B_{\varphi(p^r)})=-1$, and the claim follows from $\varphi(p^r)=(p-1)p^{r-1}.$ To see part $(c)$, combining \cite[Thm. 10.4]{andohopkinsrezk} with \cite[Thm. 10.3, (b)]{andohopkinsrezk}, one sees that there is a $p$-adic measure on $\G$ with $2k$-th moment
$-\frac{B_{2k}}{4k}(1-c^{2k})(1-p^{2k-1})\in\ZZ_p$. As $-(1-p^{2k-1})$ is a $p$-adic unit, the claim follows.
\end{proof}

Recalling the subgroup $\Momzerozwei\subseteq
\ZZ^{\NN_{\ge 4}}$ from \Cref{def0}, we can now determine $\mathrm{C}(gl_1(\tmf))$ in terms of simultaneous congruences:

\begin{thm}\label{thm7}
	The map
	\[
		\begin{tikzcd}[row sep=tiny]
			\Psi_2\colon \Momzerozwei \ar{r} & \mathrm{C}(gl_1(\tmf))\\
			(q_k)_{k\geq 4} \ar[mapsto]{r} & (G_k+q_k G_k)_{k\geq 4}
		\end{tikzcd}
	\]
	is a bijection.
\end{thm}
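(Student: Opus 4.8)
The plan is to show that $\Psi_2$ is well-defined (lands in $\mathrm{C}(gl_1\tmf)$), injective, and surjective, by comparing the defining conditions of $\mathrm{C}(gl_1\tmf)$ from \cref{thm_ahr_tmf} with the arithmetic description of $\Momzerozwei$ from \cref{thm4}. The key observation driving everything is multiplicativity: since every admissible family $(g_k)_{k\ge 4}$ in $\mathrm{C}(gl_1\tmf)$ is forced (by condition (d) and the multiplicity-one remark) to be of the form $g_k = \lambda_k G_k$ for rational scalars $\lambda_k$, writing $g_k = G_k + q_k G_k$ amounts to setting $q_k := \lambda_k - 1 \in \QQ$. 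So the content is entirely about which scalar sequences $(q_k)_{k\ge 4}$ are permissible, and I claim these are exactly $\Momzerozwei$.

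First I would establish well-definedness. Given $(q_k)_{k\ge4}\in\Momzerozwei$, set $g_k := G_k + q_k G_k = (1+q_k)G_k$. Conditions (a) and (d) of \cref{thm_ahr_tmf} are immediate since $G_k$ satisfies them and scaling preserves them. For condition (b), I need $q_k G_k \equiv 0 \bmod \ZZ\llbracket q\rrbracket$, i.e. $q_k G_k$ has integral $q$-expansion; this follows because $(q_k)\in\Momzerozwei$ forces appropriate $p$-adic integrality of $q_k$ at each prime (the non-constant coefficients of $G_k$ are integers $\sigma_{k-1}(n)$, and the constant term $-q_k B_k/(2k)$ is integral by von Staudt–Clausen combined with the $p$-adic bound on $q_k$ coming from membership in $\Momzerozwei$, cf. \cref{cor:valuation}). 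For condition (c), I need for each prime $p$ and each $c\in\Zp$ a measure $\mu_c\in\meas(\G,\MFp)$ with $\int x^{2k}d\mu_c = (1-c^{2k})(g_{2k}(q) - p^{2k-1}g_{2k}(q^p))$. Here I use that $(q_k)\in\Momzerozwei$ gives, at $p$, a $\Zp$-valued measure $\nu_p$ on $\G$ with $\int x^{2k}d\nu_p = q_{2k}$ and total mass $0$; pushing forward to $\MFp$ and convolving with the measure whose moments are $(1-c^{2k})(G_{2k}(q)-p^{2k-1}G_{2k}(q^p))$ — which exists since the Eisenstein family itself satisfies (c) — and using \cref{lem5} for multiplicativity of convolution on moments, I get a measure with moments $q_{2k}\cdot(1-c^{2k})(G_{2k}(q)-p^{2k-1}G_{2k}(q^p))$; adding the Eisenstein measure itself gives the target $(1+q_{2k})$-multiple. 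The total-mass-zero condition on $\nu_p$ is exactly what is needed for the constant terms to match up (the Euler factor at $k$ specializing correctly), which is the subtle bookkeeping point.

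Next, injectivity: if $G_k + q_k G_k = G_k + q'_k G_k$ for all $k\ge4$ then $(q_k - q'_k)G_k = 0$; since $G_k\ne 0$ as a modular form, $q_k = q'_k$, so $\Psi_2$ is injective. For surjectivity, take any $(g_k)_{k\ge 4}\in\mathrm{C}(gl_1\tmf)$. By the remark following \cref{thm_ahr_tmf} each $g_k = (1+q_k)G_k$ for a unique $q_k\in\QQ$; I must check $(q_k)_{k\ge4}\in\Momzerozwei$, i.e. verify conditions (A) and (C) of \cref{def0} with $m=2$. Condition (A) is immediate from (a) of \cref{thm_ahr_tmf}. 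For (C), I run the argument of the previous paragraph in reverse: condition (c) for $(g_k)$ supplies measures $\mu_c$ over $\MFp$; "dividing out" the Eisenstein measure (which is invertible in the convolution algebra after inverting nothing problematic, or more carefully: using the measure-theoretic cancellation so that the moments $q_{2k}$ of the difference are $\Zp$-integral) yields a $\Zp$-valued measure on $\G$ with moments $q_{2k}$ and total mass zero. Translating via \cref{thm4}, this is precisely membership in $\Momzerozwei$.

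The main obstacle I expect is the careful passage between measures valued in $\MFp$ and measures valued in $\Zp$ — specifically, extracting the "scalar part" $\mu_c \rightsquigarrow \nu_p$ cleanly and verifying that the total mass of the resulting $\Zp$-measure is genuinely zero (this is what distinguishes $\Momzerozwei$ from the "Euler factor removed" condition and is the whole reason the domain is $\Momzerozwei$ rather than $\Momeulzwei$). One has to match the constant-term/Euler-factor bookkeeping: the factor $g_{2k}(q) - p^{2k-1}g_{2k}(q^p)$ in condition (c) has constant term $(1-p^{2k-1})\cdot(1+q_{2k})\cdot(-B_{2k}/(2k))$, and the discrepancy between this "Euler-factor-removed" shape and the plain-moment shape of $\Momzerozwei$ is exactly absorbed by the total-mass-zero hypothesis together with von Staudt–Clausen (\cref{thm_vonstaudtclausen}, \cref{cor:valuation}); getting this comparison exactly right, tracking the specialization $q\mapsto$ (unit section) of $p$-adic modular forms down to $\Zp$, is the technical heart of the proof. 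The rest — well-definedness of the $q$-expansion integrality and the formal injectivity — is routine.
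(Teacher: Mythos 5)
Your overall strategy — comparing the defining conditions, using condition (d) and multiplicity one to reduce to scalar sequences, and invoking \cref{lem5} for the interpolation condition — is the same as the paper's, and your well-definedness argument for (a), (c), (d) and your injectivity argument match. But there are two genuine gaps.

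For well-definedness of condition (b), you assert that $-q_kB_k/(2k)\in\ZZ$ follows from von Staudt--Clausen together with ``the $p$-adic bound on $q_k$ coming from membership in $\Momzerozwei$, cf.\ \cref{cor:valuation}.'' But \cref{cor:valuation} only controls $v_p$ at the special indices $k=\varphi(p^r)$; it gives no bound on $v_p(q_k)$ for a general $k$. The missing step is \cref{prop_regularize}: the total-mass-zero hypothesis lets you factor the measure as $\mu_p=(\id-c_*)\tilde{\mu}_p$ for a topological generator $c$, which gives $q_{2k}=(1-c^{2k})\tilde{q}_{2k}$ with $\tilde{q}_{2k}\in\Zp$. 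Combined with the Kummer congruence fact $\frac{B_{2k}}{2k}(1-c^{2k})\in\Zp$ (for every even $2k$, not just $\varphi(p^r)$), this gives $-\frac{B_{2k}}{2k}q_{2k}\in\Zp$ for \emph{all} $k$, hence in $\ZZ$. Without this factorization your bound on $q_k$ simply isn't available.

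For surjectivity, the remark that the Eisenstein measure ``is invertible in the convolution algebra after inverting nothing problematic'' is false, and if it were true the zeta-ideal $\mathcal{Z}_p$ in the last section of the paper would be trivial. What the paper actually does is more concrete: extract the coefficient of $q$ from condition (c) to get a $\Zp$-valued measure $\mu_{p,c}$ with moments $(1-c^{2k})q_{2k}$, then invoke \cref{cor_BpBpa} (equivalence of \Bp\ and \Bpa, which again rests on \cref{prop_regularize}) to cancel the $(1-c^{2k})$ factor and obtain a measure $\mu_p$ with moments $q_{2k}$. The total-mass-zero condition is then established as a \emph{separate} step: condition (b) of \cref{thm_ahr_tmf} forces $-\frac{B_k}{2k}q_k\in\ZZ$, which together with $v_p\bigl(\frac{B_{\varphi(p^r)}}{\varphi(p^r)}\bigr)=-r$ (\cref{cor:valuation}) gives $v_p(q_{\varphi(p^r)})\ge r$, and then $\int_{\G}1\,d\mu_p=\lim_r\int_{\G}x^{\varphi(p^r)}d\mu_p=\lim_r q_{\varphi(p^r)}=0$. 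Your phrase ``measure-theoretic cancellation'' is pointing in the right direction but papers over exactly the two lemmas (\cref{prop_regularize} and \cref{cor:valuation}) that make the cancellation and the mass computation work.
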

\begin{proof}
	Let us first show that the map is well-defined, i.e. given $(q_k)_{k\geq 4}\in\Momzerozwei$, we check conditions $(a)$ through $(d)$ of \Cref{thm_ahr_tmf} for the sequence $g_k:=G_k+q_kG_k$.
 It is obvious that for $k\geq 2$ we have  
	\[ 
		g_{2k+1} = G_{2k+1}+q_{2k+1} G_{2k+1}=0\,\, (\mbox{condition }  (a)),
	\]
	and that for all primes $p$
	\[
		g_k|T(p) = (G_k+q_k G_k)|T(p)=(1+p^{k-1})(G_k+q_k G_k)=(1+p^{k-1})g_k
	\]
	holds, giving us condition $(d)$. 
	Condition $(c)$ is the assertion that for each prime $p$ and each $c\in \G$ there exists a unique measure $\mu_c\in \meas(\G,\MF_p)$ with $2k$th-moments
	\[
		\left( \int_\G x^{2k}\, d\mu_c(x)\right)(q) = (1+q_{2k})(1-c^{2k})(G_{2k}(q)-p^{2k-1}G_{2k}(q^p))\,\, (k\ge 4).
	\]
	
	Both
	\[
		((1-c^{2k})(G_{2k}(q)-p^{2k-1}G_{2k}(q^p)))_{k\ge 4} \quad \text{ and } (1+q_{2k})_{k\ge 4}
	\]
	are sequences of moments of $\MFp$-valued measures on $\G$: 
	The first one by \Cref{ex:eisenstein_works}, and the second one because
	$(q_k)_{k\ge 4}\in\Momzerozwei$. The convolution of these two measures is thus as
	required by condition $(c)$.
	
	It remains to show that $(g_k)_{k\ge 4}=(G_k+q_k G_k)_{k\geq 4}$ satisfies condition $(b)$ in \Cref{thm_ahr_tmf}. Since this is clear for odd $k$, it will suffice to show that
	\begin{equation}\label{eq:todo}
	-\frac{B_{2k}}{4k}q_{2k}\in\ZZ \text{ for all }k\geq 2.
	\end{equation}

This will imply (recalling that $q_k\in\ZZ$) that $q_{2k} G_{2k}(q)\in\ZZ\llbracket q\rrbracket$, and hence that 
	\[
			g_{2k}(q) = G_{2k}(q)+q_{2k} G_{2k}(q)\equiv -\frac{B_{2k}}{4k}  \mod \ZZ\llbracket q\rrbracket,
	\]
which is exactly condition $(b)$.

	To establish \eqref{eq:todo}, let $p$ be a prime. By the definition of Mom$^{(0)}_{\ge 4}$ there exists a measure $\mu_p\in \meas(\G,\Zp)$ with $2k$-th moments $(q_{2k})_{k\ge 2}$ and total mass
	\[
		\int_{\G}1\,\, d\mu_p=0.
	\]
	It follows from \Cref{prop_regularize} that there exists a measure $\tilde{\mu}_p\in\meas(\G,\Zp)$ with $\mu_p=(1-c_*)\tilde{\mu}_p$ where $c$ is a fixed topological generator of $\G$. This implies:
	\[
		-\frac{B_{2k}}{4k}q_{2k} = -\frac{B_{2k}}{4k} \int_{\G} x^{2k} d\mu_p =-\underbrace{\vphantom{\int_{\G} x^{2k} d\tilde{\mu}_p}\frac{B_{2k}}{4k} (1-c^{2k})}_{\in \Zp} 
		\underbrace{\int_{\G} x^{2k} d\tilde{\mu}_p }_{\in \Zp},
	\]
	see \Cref{cor:valuation}, \ref{integrality} for the first containment.
	Since $-\frac{B_{2k}}{4k}q_{2k} \in \Zp$ holds for all primes $p$,
	we have shown \eqref{eq:todo}, and hence that the map $\Psi_2$ is well-defined.\par
	The injectivity of $\Psi_2$ is clear since the coefficient of $q$ in the $q$-expansion of $G_k+q_k G_k$ is $1+q_k$. To see the surjectivity, recall that condition $(d)$ in \Cref{thm_ahr_tmf} implies that every $(g_k)_{k\geq 4}\in \mathrm{C}(gl_1(\tmf))$ is of the form $g_k=G_k+q_k G_k$ for some sequence $(q_k)_{k\geq 4}\in \QQ^\NN$, which we can assume to be even by condition $(a)$ (see also \Cref{rem:multone}). Noting that the coefficient of $q$ in the $q$-expansion of $q_k G_k$ is $q_k$ implies  $q_k\in\ZZ$ for all $k\geq 4$. 
We need to see that in fact $(q_k)_{k\ge 4}\in\Momzerozwei\subseteq\ZZ^{\NN_{\ge 4}}$. For all $c\in\G$, looking at the linear term in condition $(c)$ of \Cref{thm_ahr_tmf}, gives us a measure $\mu_{p,c}\in\meas(\G,\Zp)$ such that
	\[
			\int_{\G}x^{2k}d\mu_{p,c}(x) =(1-c^{2k})(1+q_{2k})\,\,(k\ge 4).
	\]
	From the equivalence of condition \Bp\ and \Bpa\, ,  compare \Cref{cor_BpBpa}, and subtracting the constant measure with mass one, there also exists a (unique) measure $\mu_p\in\meas(\G,\Zp)$ such that
	\begin{equation}\label{eq:moments}
			\int_{\G}x^{2k}d\mu_{p}(x) =q_{2k}\,\, (k\ge 4).
	\end{equation}
	In order to show $(q_k)_{k\geq 4}\in\Momzerozwei$, it remains to show that
	\[
		\int_{\G}1\,d\mu_{p} =0
	\]
	for all primes $p$. 
	Condition $(b)$ in \Cref{thm_ahr_tmf} for $g_k=G_k+q_kG_k$ implies $-\frac{B_k}{2k}q_k\in\ZZ$ ($k\ge 4$). Choosing $k=\varphi(p^r)$ for some $r\ge 2$, this implies 
	\begin{equation}\label{eq:pole}
		v_p(q_{\varphi(p^r)})\geq -v_p\left(-\frac{B_{\varphi(p^r)}}{2\varphi(p^r)}\right)
		\stackrel{\text{Cor.}\ref{cor:valuation},\ref{exact_value}}{\ge} r.
	\end{equation}
We can now compute
	\[
		\int_{\G}1\,d\mu_{p}(x) = \lim_{r\rightarrow \infty}\int_{\G} x^{\varphi(p^r)}d\mu_{p}(x)\stackrel{\eqref{eq:moments}}{=} \lim_{r\rightarrow \infty} q_{\varphi(p^r)} \stackrel{\eqref{eq:pole}}{=}0,
	\]
 as desired. This concludes the proof of the surjectivity of $\Psi_2$.
\end{proof}

\begin{cor}
	There is a bijection
	\[
		\widehat{\ZZ}\times \ZZ^\NN \simeq \pi_0 \mathbb{E}_\infty (\MString,\tmf)/[bstring,gl_1(\tmf)]_{tors}.
	\]
\end{cor}

\begin{proof}
 Follows immediately from \Cref{thm7} and \Cref{thm5}, along with the discussion at the beginning of the present subsection.
\end{proof}

\subsection[Evaluation at the cusp]{Evaluating $\mathbb{E}_\infty$-String orientations of tmf at the cusp}\label{sec:eval_at_cusp}

Evaluating a modular form at the cusp refines to an 
$\mathbb{E}_\infty$-map\newline
 $\mathrm{ev}\colon\tmf\to\KO$ (\cite[Thm. A8]{hill_lawson}), and we ask about the
induced map on $\mathbb{E}_\infty$-String orientations
 \[ \mathrm{ev}_* \colon\pi_0 \mathbb{E}_\infty(\MString,\tmf)/[bstring,gl_1(\tmf)]_{\mathrm{tors}}\longrightarrow \mathbb{E}_\infty(\MString,\KO). \footnote{Recall that there is no torsion-ambiguity for maps to $\KO$, hence
 ev$_*$ factors as indicated.}\]
 Using \Cref{thm_ahr_tmf} and \Cref{thm7} for the source and \Cref{thm_ahs} and
 \eqref{eq:c_is_mom} in the proof of \Cref{thm:many_orient} for the target of ev$_*$, this map is identified with a map
 \[ \mathrm{ev}\colon \Momzerozwei\longrightarrow\Momeulzwei,\]
 and we leave to the reader to check that this map is given by
 \begin{equation}\label{eq:ev}
  \mathrm{ev}\left( (q_k)_{k\ge 4}\right) = (-\frac{B_k}{2k}q_k)_{k\ge 4}.
  \end{equation}
 Since $B_k\neq 0$ for even integers $k$, for example because $v_2(B_k)=-1$ by \Cref{cor:valuation},\ref{estimation}, the map $\mathrm{ev}_*$ is injective. To describe its image, we
 first need a short reminder on some classical number theory.\par

Recall from \cite[Thm 10.6]{andohopkinsrezk} and \cite[Thm 10.3, (b)]{andohopkinsrezk} that for every prime $p$ and $c\in\ZZ_p^*$ there
exist a (unique) measure $\mu^{\mathrm{zeta}}_{p,c}\in\meas(\G,\ZZ_p)$
with moments
\begin{equation}\label{eq:zeta_moments}
 \int_\G x^{2k}\,d\mu^{\mathrm{zeta}}_{p,c}(x) = -(1-c^{2k})(1-p^{2k-1})\frac{B_{2k}}{4k}\, , \, k\ge 1.
\end{equation}
Recalling that $\meas(\G,\ZZ_p)$ is a commutative algebra under convolution,
we define {\em the zeta-ideal} to be the principal ideal 
\[ \mathfrak{Z}_p\subseteq \meas(\G,\ZZ_p)\]
generated by $\mu^{\mathrm{zeta}}_{p,c}$ for any choice of topological
generator $c\in\G$. This well-defined by the following result.

\begin{lem}\label{lem_zetaideal}
	The ideal generated by $\mu_{p,c}^{\mathrm{zeta}}$ is independent of $c$. 
\end{lem}
\begin{proof}
We note that the convolution $*$ on $\meas(\G,\ZZ_p)$ is such that for every
$c\in\G$ and $\mu\in\meas(\G,\ZZ_p)$ we have $c_*(\mu)=c_*(1)*\mu$.
Now fix two topological generators $c,\tilde{c}\in\G$. Then \Cref{prop_regularize}
implies that $1-c_*(1)$ and $1-\tilde{c}_*(1)$ generate the same ideal of
$\meas(\G,\ZZ_p)$ (namely the kernel of $\mu\mapsto\int_\G d\mu$). Hence there is a unit $u$ such that $1-c_*(1) = u * (1-\tilde{c}_*(1))$. A straight-forward comparison of moments shows that $\mu_{p,c}^{\mathrm{zeta}}=u * \mu_{p,\tilde{c}}^{\mathrm{zeta}}$, as desired.
\end{proof}

We now determine the image of 
 \[ \mathrm{ev}\colon \Momzerozwei\hookrightarrow\Momeulzwei,\]
 as follows. Given $(b_k)_{k\ge 4}\in\Momeulzwei$, a prime $p$
 and a topological generator $c\in\G$ there is, by definition of 
 $\Momeulzwei$, a measure $\mu_{p,c}\in\meas(\G,\ZZ_p)$ with $2k$-th
 moments equal to 
 \begin{equation}\label{eq:moments_of_b}
 \int_\G x^{2k}\,d\mu_{p,c}(x) = (1-c^{2k})(1-p^{2k-1})b_{2k} \,\, (k\ge 2).
 \end{equation}
 
 \begin{thm}\label{thm:zeta_ideal}
 In the above situation, the following are equivalent:
 \begin{enumerate}
 \item The sequence $(b_k)_{k\ge 4}$ is in the image of ev.
 \item For every prime $p$, $\mu_{p,c}\in \mathfrak{Z}_p$.
 \end{enumerate}
 \end{thm}
 
 \begin{proof}
 Assume $(a)$. By \Cref{eq:ev} we have $b_k=-\frac{B_k}{2k}q_k$
 for a sequence $(q_k)\in\Momzerozwei$. A comparison of moments using 
 \Cref{eq:zeta_moments} and \Cref{eq:moments_of_b} shows that $\mu_{p,c}$
 is the convolution of $\mu_{p,c}^{\mathrm{zeta}}$ and the measure 
 corresponding to $(q_k)$, hence showing $(b)$.\par
Now assume $(b)$ and write $\mu_{p,c}=\mu_{p,c}^{\mathrm{zeta}}*\nu_p$
for a suitable $\nu_p\in\meas(\G,\ZZ_p)$. On moments, this equality yields
after canceling factors $(1-c^{2k})(1-p^{2k-1})$
\begin{equation}\label{eq:relation}
 b_{2k} = -\frac{B_{2k}}{4k}\beta_{2k}\mbox{ in }\ZZ_p\mbox{ for all }k\ge 2,
 \end{equation}
with $\beta_k$ denoting the $k$-th moment of $\nu_p$.
This equation firstly shows that for all $p$ and $k$, we have $\beta_{2k}
\in\QQ\cap\ZZ_p$, and hence that $\beta_{2k}\in\ZZ$. To conclude that 
$(\beta_k)\in\Momzerozwei$ (and hence $(b_k)$ is in the image of ev
by \Cref{eq:ev}), it remains to see that 
$\int_\G 1 \,d\nu_p(x)=0$ for all primes $p$. This follows from
\Cref{eq:relation} as in the proof of \Cref{thm7}.
 \end{proof}

\begin{rem}
	We can express the condition $(b)$ more explicitly in terms of moments as follows: A sequence $(b_k)_{k\ge 4}\in\Momeulzwei$ is in the image of the \emph{evaluation at the cusp map} \text{ev} if and only if $\beta_{2k}:=-4k\frac{b_{2k}}{B_{2k}}$, $k\geq 2$ is a sequence of integers which is contained in
	\[
		(\beta_k)_{k\geq 4} \in \Momzerozwei.
	\]
	Checking this last condition is tantamount to check the corresponding generalized Kummer congruences which is in general a difficult task. Nevertheless, we can derive the following necessary condition for $(b_k)_{k\ge 4}\in\Momeulzwei$ being contained in the image of \text{ev}: Let us write
	\[
		B_{2k}=\frac{n_{2k}}{d_{2k}},\quad \text{gcd}(n_{2k},d_{2k})=1
	\]
	with $n_{2k}\in\ZZ_{\geq 0}$ the numerator and $d_{2k}\in\ZZ$ the denominator of the Bernoulli numbers. A sequence $(b_k)_{k\ge 4}\in\im(\text{ev})$ is always divisible by the numerators of the Bernoulli numbers, i.e. $n_{2k}|b_{2k}$ for all $k\geq 2$.	This gives an obstruction against being contained in the image of the evaluation map which is easier to check in practice. Using this obstruction we will see in the following example that the map $\text{ev}_*$ is not surjective:
\end{rem}

\begin{exmpl}
The map 
 \[ \mathrm{ev}_* \colon\pi_0 \mathbb{E}_\infty(\MString,\tmf)/[bstring,gl_1(\tmf)]_{\mathrm{tors}}\longrightarrow \mathbb{E}_\infty(\MString,\KO)\]
 is not surjective. To see this, we have to exhibit
 a sequence $(b_k)\in\Momeulzwei$ not in the image of ev.
 We can do this explicitly using our isomorphism $\Phi_2$ from \Cref{thm3}, as follows.
  Let $a$ be an integer which is prime to $691$. Note that the prime $691$ is the numerator of  $\frac{B_{12}}{2\cdot 12}$. Let $l\in\ZZ^{\NNo}$ be any sequence starting with $l=(0,0,0,0,a,...)$. This gives $\Phi_2(l)=(b_4,0,b_6,0,b_8,0,b_{10},0,b_{12},0,...)\in\Momeulzwei $ with $b_4=b_6=b_8=b_{10}=0$ and $b_{12}=a\cdot\Phi_{8,4}^{(2)}$. From \Cref{eq:defPhi2} preceding \Cref{thm3}, we see that 
  $v_p(\Phi_{8,4}^{(2)})=0$ for all primes $p\not\in S_4=\{ p\mid  p-1 \leq 8\}$.
  In particular, $v_{691}(\Phi_{8,4}^{(2)})=0$ and
   thus $\Phi_2(l)$ is not in the image of ev since $v_{691}(b_{12})=0$ but every element $(a_k)_{k\geq4}\in\im(\mathrm{ev})$ is of the form $a_k=-\frac{B_k}{2k}q_k$ for some $(q_k)_{k\geq 4}\in\Momzerozwei$ and thus satisfies
	\[
		v_{691}(a_{12})=v_{691}\left(-\frac{B_{12}}{2\cdot 12}\right)+v_{691}(q_{12})=1+v_{691}(q_{12})\geq 1.
	\]
	The same argument works with any irregular prime $p$ in place of $691$.
\end{exmpl}

\subsection{Comparing $\mathbb{E}_\infty$-String and $\mathbb{E}_\infty$-Spin orientations of KO}\label{sec:comparing}

\Cref{thm:many_orient} implies in particular that there are uncountably many
$\mathbb{E}_\infty$-String orientations $\MString\to\KO$ besides the composition
$\MString\xrightarrow{\pi}\MSpin\xrightarrow{\hat{A}}\KO$, where $\pi$ denotes the 
canonical map. To understand these orientations a bit more, we wish to ask which of them 
come from an $\mathbb{E}_\infty$-Spin orientation, i.e. we want to know about (the image of) the map
\[ \pi^*\colon\pi_0\mathbb{E}_\infty(\MSpin,\KO)\longrightarrow\pi_0\mathbb{E}_\infty(\MString,\KO).\]
We find the following.

\begin{thm}\label{thm:stringspinko}
There is a Cartesian square of sets
\[
\begin{tikzcd}[row sep=large]
	\pi_0\mathbb{E}_\infty(\MSpin,\KO) \ar[d,two heads] \ar[r,"{\pi^*}"] &  \pi_0\mathbb{E}_\infty(\MString,\KO) \ar[d,"{\alpha}"] \\
	\ZZ \ar[r,hook] & \widehat{\ZZ}
\end{tikzcd}
\]
with injections and surjections as indicated. We have $\alpha(\pi^*(\widehat{A}))=0$.
\end{thm}

Here, $\ZZ\hookrightarrow\widehat{\ZZ}$ is the inclusion of $\ZZ$ into its pro-finite
completion. Put differently, for every $\mathbb{E}_\infty$-String orientation $f:\MString\to\KO$
there is an invariant $\alpha(f)\in\widehat{\ZZ}$ such that $f$ factors (necessarily uniquely)
through $\MSpin$ if and only if $\alpha(f)$ lies in the dense subgroup $\ZZ\subseteq
\widehat{\ZZ}$. Since $\widehat{\ZZ}/\ZZ$ is uncountable, there are many $f$'s which 
do not factor over $\MSpin$.

\begin{proof}[Proof of \Cref{thm:stringspinko}]
Using \Cref{thm_ahs} and \Cref{eq:c_is_mom} in the proof of \Cref{thm:many_orient}, in the following
we identify the map $\pi^*$ with the map
\[ \mathrm{Mom}^{\mathrm{Euler}}_{\ge 2}\longrightarrow\mathrm{Mom}^{\mathrm{Euler}}_{\ge 4}\, , \, (b_k)_{k\ge 2}\mapsto (b_k)_{k\ge 4}. \]
We then define $\alpha\colon\mathrm{Mom}^{\mathrm{Euler}}_{\ge 4}\to\widehat{\ZZ}
\simeq\prod\limits_p\ZZ_p$ by setting 
\[ \left( \alpha((b_k)_{k\ge 4})\right)_p := \int_\G x^2\, d\mu_p(x)\, \in\ZZ_p\]
for all primes $p$, where $d\mu_p\in\meas(\G,\ZZ_p)$ is the unique measure with
moments $b_k$ ($k\ge 4$). Since any tail of moments determines the corresponding measure, $\pi^*$
is injective. It is clear that a sequence $(b_k)_{k\ge 4}\in\mathrm{Mom}^{\mathrm{Euler}}_{\ge 4}$
lies in $\mathrm{Mom}^{\mathrm{Euler}}_{\ge 2}$ if and only if 
$\alpha((b_k)_{k\ge 4})\in\ZZ$. Using the variant of \Cref{thm2} for $m=2$
but with $b_2\in\widehat{\ZZ}$ (rather than $b_2\in\ZZ$) being allowed, one checks that $\alpha$ is surjective. Finally, the fact that $\alpha(\pi^*(\widehat{A}))=0$
follows because we used $\widehat{A}$ to trivialize the torsor in \Cref{eq:c_is_mom}.
\end{proof}

\bibliographystyle{amsalpha} 
\bibliography{interpolation_and_orientations}
\end{document}